\documentclass{article}
\pdfoutput=1
\usepackage[utf8]{inputenc}
\usepackage{enumerate}
\usepackage{amsfonts,amsmath,amssymb,amsthm,bbm,bm}
\usepackage{mathrsfs}
\usepackage{mathtools}
\usepackage{xcolor}
\usepackage{soul}
\usepackage{forest}
\setstcolor{red}

\usepackage{graphicx}
\usepackage{caption}
\usepackage{subcaption}
\usepackage{float}

\usepackage{tikz}
\usepackage{pgfplots}

\usepackage[style=alphabetic,backend=biber, sorting = nyt, maxbibnames=99]{biblatex}
\addbibresource{ref.bib}
\usepackage{sepfootnotes}
\usepackage[hyperfootnotes=false]{hyperref}
\hypersetup{colorlinks}
\usepackage[colorinlistoftodos]{todonotes}

\usepackage{thmtools}
\usepackage{hyperref}
\usepackage[capitalise]{cleveref}

\newtheorem{theorem}{Theorem}[section]
\newtheorem{corollary}[theorem]{Corollary}

\newtheorem{assumption}[theorem]{Assumption}
\newtheorem{lemma}[theorem]{Lemma}

\theoremstyle{remark}
\newtheorem{remark}[theorem]{Remark}

\theoremstyle{definition}
\newtheorem{definition}[theorem]{Definition}
\newtheorem{example}[theorem]{Example}

% Common symbols
\newcommand{\E}{\mathbb{E}}
\newcommand{\R}{\mathbb{R}}
\newcommand{\N}{\mathbb{N}}

\newcommand{\sP}{\mathbb{P}}
\newcommand{\calF}{\mathcal{F}}
\newcommand{\calP}{\mathcal{P}}

\newcommand{\calC}{\mathcal{C}}

%matrix 

%time 

% \mathcal

\def\calB{{\mathcal{B}}}

\def\calX{{\mathcal{X}}}
\def\calK{{\mathcal{K}}}

% OT
\newcommand{\W}{\mathcal{W}}
\newcommand{\AW}{\mathcal{AW}}

\newcommand{\TV}{\mathrm{TV}}
\newcommand{\ATV}{\mathrm{ATV}}

\newcommand{\cpl}{\mathrm{Cpl}}
\newcommand{\bccpl}{\mathrm{Cpl}_{\mathrm{bc}}}

% equality

% matrix

\oddsidemargin 0cm \evensidemargin -0.7cm
\textwidth 16.5cm

\numberwithin{equation}{section}

\title{Estimating causal distances with non-causal ones}

\author{Beatrice Acciaio, Songyan Hou, Gudmund Pammer}
\date{\today}

\begin{document}
\maketitle

\begin{abstract}
The adapted Wasserstein ($\AW$) distance refines the classical Wasserstein ($\W$) distance by incorporating the temporal structure of stochastic processes. This makes the $\AW$-distance well-suited as a robust distance for many dynamic stochastic optimization problems where the classical $\W$-distance fails. However, estimating the $\AW$-distance is a notably challenging task, compared to the classical $\W$-distance. In the present work, we build a sharp estimate for the $\AW$-distance in terms of the $\W$-distance, for smooth measures. This reduces estimating the $\AW$-distance to estimating the $\W$-distance, where many well-established classical results can be leveraged. As an application, we prove a fast convergence rate of the kernel-based empirical estimator under the $\AW$-distance, which approaches the Monte-Carlo rate ($n^{-1/2}$) in the regime of highly regular densities. These results are accomplished by deriving a sharp bi-Lipschitz estimate of the adapted total variation distance by the classical total variation distance.
\\

\noindent\emph{Keywords:} adapted Wasserstein distance, adapted total variation, statistical estimation\\

\end{abstract}

\section{Introduction}
\label{sec:intro}
Dynamic optimization problems on stochastic processes arise naturally in management sciences, economics, and finance, to support decision-making under uncertainty. In this context, estimating the change of optimal values with respect to small changes in the law of the underlying stochastic process is crucial for evaluating robustness and quantifying uncertainty.
Wasserstein ($\mathcal W$) distances are ubiquitously used to measure proximity of probability distributions, but they fail to provide continuity for dynamic optimization problems. This is because $\W$-distances do not take into account the temporal flow of information inherent in stochastic processes.
To address this issue, adapted Wasserstein  ($\AW$) distances, also known as nested Wasserstein distances, have been introduced as a variant of $\W$-distances that accounts for the time-causal structure of stochastic processes 
and the underlying information flow (\cite{ruschendorf1985wasserstein}).
$\AW$-distances serve as robust distances for many dynamic stochastic optimization problems across several fields, particularly in mathematical finance and economics; see \cite{backhoff2020adapted, bion2009time, glanzer2019incorporating, bartl2023sensitivity}. More precisely, we have estimates of the type
\[
|V(\mu) - V(\nu)| \leq C \, \AW(\mu,\nu),
\]
where \(\mu\) and \(\nu\) are distributions on a path space, \(V(\cdot)\) denotes the optimal value of a multi-period stochastic control problem under the corresponding measure, and \(C > 0\) is a constant depending only on the structure of the control problem (e.g., boundedness of admissible strategies, Lipschitz continuity of utility functions), but independent of \(\mu\) and \(\nu\); see \cite{pflug2014multistage,backhoff2020adapted}. While the nested structure of the $\AW$-distances ensures robustness, it poses greater challenges on the estimation of such distances compared to the classical $\W$-distances:
\begin{enumerate}[-]
    \item \emph{Theoretical challenge}: Explicit closed-form expressions for $\AW$-distances are scarce and available only in specific cases, such as SDEs (see \cite{backhoff2020adapted, bion2019wasserstein, cont2024causal, robinson2024bicausal}) and discrete Gaussian processes (\cite{gunasingam2025adapted,acciaio2024entropic}).
    \item \emph{Statistical challenge}: $\AW$-distances are so strong that even empirical measures do not converge to the true measure under them. Alternative consistent empirical estimators are required, such as adapted empirical measures (see \cite{backhoff2022estimating,acciaio2024convergence}) or smoothed empirical measures (see \cite{pflug2016empirical, hou2024convergence}). 
    \item \emph{Computational challenge}: In general, the computation of $\AW$-distances involves backward dynamic programming (see \cite{pichler2022nested,eckstein2024computational,bayraktar2023fitted}), which is computationally very costly when the path length is long.
\end{enumerate}
Although $\AW$-distances are generally difficult to estimate, recent results show that for probability measures with certain regularity properties, they can be bounded from above by classical Wasserstein distances, such as $\AW_1(\mu,\nu) \lesssim \W_1^{\frac{1}{2}}(\mu,\nu)$;\footnote{We write $\lesssim$ to denote that the inequality holds up to a positive multiplicative constant.} see \cite{blanchet2024bounding}. Motivated by this, the aim of the present paper is to establish a sharp estimate of $\AW$-distances from above by $\W$-distances for regular measures, where we quantify the regularity of a measure by the Sobolev norm of its (Lebesgue) density. For measures $\mu$ and $\nu$ with Sobolev densities of order $k$, we show that $\AW_1(\mu,\nu)\lesssim \W_1^{\frac{k}{k+1}}(\mu,\nu)$ and that the order $\frac{k}{k+1}$ is sharp. As an application, we demonstrate how this sharp estimate simplifies the task of estimating $\AW$-distances to the estimation of $\W$-distances, for which many well-established tools are available. Specifically, we prove a fast convergence rate for the empirical estimator under $\AW_1$, meaning  that the convergence rate approaches to the Monte-Carlo rate ($n^{-1/2}$) as $k$ tends to infinity. The proof follows directly by leveraging known convergence rates under Wasserstein distances for smooth measures, whereas deriving such a result would be considerably more involved without this estimate.

\paragraph*{Main Contributions}
In the present paper, we derive upper bounds for adapted Wasserstein distances in terms of total variation and classical Wasserstein distances for smooth measures. Specifically, for any probability measures $\mu,\nu$, we establish the following chain of estimates (see Theorem~\ref{thm:weighted_main_inequality} and Corollary~\ref{thm:AWpTVp}):
\begin{equation}    \label{eq:intro.AW.W}
    \AW_1(\mu,\nu) \lesssim \ATV(\mu,\nu) \lesssim \TV(\mu,\nu),
\end{equation}
where $\TV$ denotes the total variation distance and $\ATV$ its adapted counterpart; see explicit expressions in Theorem~\ref{thm:weighted_main_inequality} and Corollary~\ref{thm:AWpTVp}.
The estimate \eqref{eq:intro.AW.W} was first noticed in \cite{eckstein2024computational}---where $\ATV$ was first introduced---albeit under a compactness assumption and with a constant that scales exponentially in the number of time steps. This estimate was further generalized to the non-compact setting in \cite{hou2024convergence}, where weighted versions of $\ATV$ and $\TV$ were introduced to establish \eqref{eq:intro.AW.W}. In this work, we obtain sharp constants in $\ATV(\mu,\nu) \lesssim \TV(\mu,\nu)$ for both weighted and classical versions, and, surprisingly, the sharp constant $C = 2T - 1$ for the classical $\ATV$ and $\TV$. Remarkably, this constant does not scale exponentially as in \cite{eckstein2024computational}, but linearly in the path length $T$, thereby taming the curse of dimensionality.
If $\mu,\nu$ have Sobolev regularity of order $k$, for some $k\in\N$, we further have the following estimate (see Theorem~\ref{thm:AWtoW}):
\begin{equation}
\label{eq:intro:AW2W}
    \AW_p^p(\mu,\nu)\lesssim \left(\|f \|_{k,1} + \|g \|_{k,1}\right)^{\frac{1}{k+1}} \W_1^{\frac{k}{k+1}}(\mu,\nu),
\end{equation}
for $p\geq 1$. In particular, for $\W_1$ we demonstrate that the order of the exponent $\frac{k}{k+1}$ in the above estimate is optimal for measures with Sobolev regularity of order $k$; see Example~\ref{ex:orderissharp}.

Having estimated adapted distances with non-adapted ones, we can now rely on the rich theory that has already been developed for Wasserstein distances and total variation distances in the last decades. For example, nonparametric density estimation problems with error measured by the Wasserstein distance are popular in many areas of statistics and machine learning. In particular, minimax-optimal rates have been established for smooth densities under Wasserstein distances in \cite{niles2022minimax, divol2022measure, manole2024plugin}. We can then leverage such results and combine them with our estimates to build an empirical kernel density estimator $f^{(n)}_{k}$ based on $n$ observations of the underlying measure $\mu$. Specifically, in the setting of Theorem~\ref{thm:fast_rate_compact}, where the underlying measure  has density $f$, bounded away from zero and with finite $s$-order Besov norm (see Definition~\ref{def:besov}) with $s > k \in \N$, we show that the estimator has the following moment convergence rate:
\begin{equation*}
\E\big[\AW_1(\mu,\hat \mu_n)\big] \lesssim n^{-\frac{k}{2k + dT}\frac{k}{k+1}},
\end{equation*}

where $\hat \mu_n$ is the measure with density $\hat f_n$; see Theorem~\ref{thm:fast_rate_compact}. The above rate of convergence approaches the Monte-Carlo rate as the smoothness $k$ of the underlying density tends to infinity.
\paragraph*{Related Literature}

Bounding relationships between different probabilistic distances gives powerful tools in applications; see \cite{gibbs2002choosing} for a summary of such results among several important probability metrics. For the $\AW$-distance, first established in \cite{eckstein2024computational} that $\W \leq \AW \lesssim \ATV \lesssim \TV$ in the compact setting. The bounds $\W \leq \AW \lesssim \ATV$ follow directly from definitions, whereas the inequality $\ATV \lesssim \TV$ is surprising, as it shows that an adapted distance can be bounded by a non-adapted one. This estimate was further generalized to a non-compact setting in \cite{hou2024convergence}. In \cite{blanchet2024bounding}, Blanchet, Larsson, Park, and Wiesel leverage this estimate to derive a bound of the $\AW$-distance in terms of the classical $\W$-distance:
\begin{equation}
\label{eq:intro:AW2W_their}
\AW_p^p(\mu,\nu) \lesssim \W_1(\mu,\nu)^{\frac{p}{p+1}},
\end{equation}
where $p \geq 1$, $K \subseteq \R^{dT}$ is compact, and $\mu, \nu \in \calP(K)$ are probability measures with Lipschitz kernels under the $\W$-distance; see Corollary 31(b) (with $\alpha = 1$) in \cite{blanchet2024bounding}. Notably, inequality \eqref{eq:intro:AW2W_their} is closely related to our main result, though the two bounds are derived through completely different approaches. In \cite{blanchet2024bounding}, the authors first bound the smoothed adapted Wasserstein distance $\AW_p^{(\sigma)}(\mu,\nu)$ by $\W_1(\mu,\nu)$, and then estimate the difference between $\AW_p^{(\sigma)}(\mu,\nu)$ and $\AW_p(\mu,\nu)$ when $\mu, \nu$ have Lipschitz kernels. 
In contrast, under the same compactness assumption, our approach directly bounds $\AW_p(\mu,\nu)$ by the total variation $\TV(\mu,\nu)$ (see Definition~\ref{def:wTV_wATV}), and then bounds $\TV(\mu,\nu)$ by $\W_1(\mu,\nu)$ under the additional assumption that $\mu, \nu$ have smooth densities. Since the two results rely on different regularity assumptions---kernel regularity in \cite{blanchet2024bounding} versus density smoothness in our work---they are difficult to compare directly. To the best of the authors' knowledge, neither assumption is stronger than the other. Nevertheless, in terms of convergence order, when $k > p$, our bound \eqref{eq:intro:AW2W} dominates \eqref{eq:intro:AW2W_their}, and vice versa when $p > k$. Also, an intermediate step in our approach bounding $\TV_p$ by $\W_1$ generalizes the result in \cite{chae2020wasserstein}, where Chae and Walker established a sharp bound for the total variation in terms of the Wasserstein distance for smooth measures.

Bounding techniques such as these are widely used to study convergence rates of empirical estimators, especially when direct computation of the probabilistic distance is intractable. In the seminal work \cite{dereich2013constructive}, Dereich, Scheutzow, and Schottstedt introduced a partition-based distance to bound the Wasserstein distance. This technique was subsequently used in the influential paper \cite{fournier2015rate}, where Fournier and Guillin established sharp convergence rates of empirical measures under $\W$-distances. Similarly, Niles-Weed and Berthet in \cite{niles2022minimax} proved minimax convergence rates for smooth densities under $\W$-distances by bounding them with Besov norms of negative smoothness; related bounds also appear in \cite{peyre2018comparison} and \cite{shirdhonkar2008approximate}. Compared to classical $\W$-distances, empirical convergence rates under $\AW$-distances are more challenging to obtain, since empirical measures may not converge to the underlying measure. Two alternative estimators have been studied: the smoothed empirical measure and the adapted empirical measure. The adapted empirical measure was introduced by Backhoff et al.\ in \cite{backhoff2022estimating}, where sharp convergence rates were provided in the compact setting, result later extended to general measures in \cite{acciaio2024convergence}. For the smoothed empirical measure, Pichler and Pflug first established convergence under $\AW$ in \cite{pflug2016empirical}, and convergence rates were later derived in \cite{hou2024convergence}. In the present paper, we propose a third empirical estimator $\mu^{(n)}_{k}$ of the underlying measure $\mu$ based on $k$-th order smooth kernels (see Definition~\ref{def:kthorderkernel}) and, as an application of our bound \eqref{eq:intro:AW2W}, we establish fast convergence rate of $\E[\AW_1(\mu, \mu^{(n)}_{k})]$ under the $\AW$-distance for smooth measures. 
A similar fast convergence rate has also been established in \cite{hou2024convergence} (see Theorem~4.5), which states that for compact measure $\mu$ and $p\geq 1$, $\E[\AW_{p}^{(\sigma)}(\mu, \mu^n)] \lesssim n^{-1/2}$, where $\mu^n$ is the empirical measure of $\mu$. This is later generalized to $\mu \in \calP_p(\R^{dT})$ in \cite{larsson2025fast} (see Theorem~4). Let us point out a structural difference between the results in \cite{hou2024convergence, larsson2025fast} and the fast rate in the present paper. The results in \cite{hou2024convergence, larsson2025fast} establish convergence rates for smoothed adapted Wasserstein distance with a fixed $\sigma > 0$, while the fast rate in the present paper holds for adapted Wasserstein distance without smoothing the underlying measure under Sobolev regularity.

\paragraph*{Organization of the paper}
We conclude Section~\ref{sec:intro} by introducing notations used throughout the paper.
In Section~\ref{sec:main}, we introduce the setting and state our main results. In Section~\ref{sec:ATV_TV}, we prove the sharp estimate of $\ATV$-distances by $\TV$-distances. In Section~\ref{sec:AW_W}, we prove the sharp estimate of $\AW$-distances by $\W$-distances for smooth measures. In Section~\ref{sec:fast}, we prove the fast convergence rate of our kernel density estimator under $\AW$-distances.

\paragraph*{Notations}
We work on the path space $\R^{dT}$, where $\R^d$ is the state space and $T \in \N$ is the number of time steps. We write $X=(X_t)_{t = 1}^T$ for the canonical process on the path space, and $x = (x_1,\dots,x_T)\in\R^{dT}$ for a generic path. For $x = (x_t)_{t = 1}^T \in\R^{dT}$ and natural numbers $1\leq s\leq t\leq T$, we use the shortening $x_{s:t}=(x_s,\ldots,x_t)$. For $\mu \in \calP(\R^{dT})$, we denote the up-to-time-$t$ marginal of $\mu$ by $\mu_{1:t}$ (with  $\mu_1 = \mu_{1:1}$ for simplicity), and the kernel of $\mu$ w.r.t.\ $x_{1:t}$ by $\mu_{x_{1:t}}$, so that $\mu(dx_{t+1}) = \int_{\R^{dt}}\mu_{x_{1:t}}(dx_{t+1})\mu_{1:t}(dx_{1:t})$.
We denote by $\calP(\R^{dT})$ the set of all probability measures on $\R^{dT}$ and, for $p\geq 1$, we write $\calP_p(\R^{dT})$ for the subset of measures with finite $p$-th moment.
Given $n,m\in\N$, a measure $\mu\in\calP(\R^n)$ and a measurable map $f : \R^n \to \R^m$, we denote by $f_\# \mu$ the push-forward measure of $\mu$ under $f$, that is, the measure $\nu\in\calP(\R^m)$ satisfying $\nu(A)=\mu(f^{-1}(A))$, for any Borel set $A\in\mathcal B(\R^m)$.
The set of finite, signed Borel measures on $\R^n$, denoted by $\mathcal M(\R^n)$, endowed with the order relation $\mu \le \nu$ if $\mu(A) \le \nu(A)$ for all $A \in \mathcal B(\R^n)$, is a complete lattice and we write $\mu \wedge \nu$ resp.\ $\mu \vee \nu$ for the minimum resp.\ maximum of $\mu$ and $\nu$ w.r.t.\ this order.
For $n\in\N$, $p\geq 1$, $x\in\R^n$, we  denote
$|x|^p=\sum_{k=1}^n |x_k|^p$.
For a multi-index $\alpha = (\alpha_1, \alpha_2, \dots, \alpha_d) \in \mathbb{N}_0^d$, we denote its length  by $|\alpha| := \alpha_1 + \alpha_2 + \cdots + \alpha_d$ and set $x^\alpha \coloneqq x_1^{\alpha_1} x_2^{\alpha_2} \cdots x_d^{\alpha_d}$, $x\in \R^d$.

\section{Main Results}
\label{sec:main}
We start this section by introducing all the relevant distances. Next, we establish the chain of bounds \( \AW \lesssim \ATV \lesssim \TV \), and subsequently show the additional bound \(  \TV \lesssim \W \). Finally, we leverage this full chain of inequalities to prove the fast convergence rate of the empirical density estimator under $\AW_1$-distance.

\subsection{Adapted Wasserstein and TV distances}
\label{subsec:pre}
First we recall the definition of Wasserstein distance. 
\begin{definition}
    For $p\geq 1$, the $p$-th order \textit{Wasserstein distance} $\W_{p}$ on $\calP_p(\R^{dT})$ is defined by 
    \begin{equation*}
        \W_{p}^p(\mu,\nu) = \inf_{\pi \in \cpl(\mu,\nu)}\int_{\R^{dT}\times\R^{dT}} |x-y|_p^p \,\pi(dx,dy),
    \end{equation*}
    where $\cpl(\mu,\nu)$ denotes the set of couplings between $\mu$ and $\nu$, that is, probabilities on $\R^{dT}\times\R^{dT}$ with first marginal $\mu$ and second marginal $\nu$.
\end{definition}
In the present work, we are interested in the case of couplings satisfying a bicausality constraint. 
This is motivated by the fact that the optimal value of many stochastic optimization problems in dynamic settings---such as optimal stopping problems and utility maximization problems---is not continuous with respect to the Wasserstein distance. This means that stochastic models can be arbitrarily close to each other under Wasserstein distance, whilst the values of the mentioned optimization problems under those models can be quite far from each other. This discrepancy arises because the optimal coupling under the Wasserstein distance does not account for adaptedness with respect to the information flow, which is crucial for admissible optimizers of such problems in dynamic settings. Therefore, we focus on a smaller set of couplings, for which adaptedness with respect to the information flow is respected. Crucially, the mentioned optimization problems are Lipschitz continuous with respect to the adapted Wasserstein distances \cite{backhoff2020adapted}.
For this, denote by $(X,Y)=((X_t)_{t = 1}^T,(Y_t)_{t = 1}^T)$ the canonical process on $\R^{dT}\times\R^{dT}$. Then, a coupling $\pi\in\cpl(\mu,\nu)$ is called bicausal if, for all $t=1,\ldots T-1$, 
\[
\pi(Y_t|X)=\pi(Y_t|X_1,\ldots,X_t)\quad \text{and}\quad \pi(X_t|Y)=\pi(X_t|Y_1,\ldots,Y_t),
\]
in which case we write $\pi\in\bccpl(\mu,\nu)$.\footnote{In the optimal transport literature, causal and bicausal couplings have been introduced in a more general setting, with respect to general filtrations on the path space; see \cite{lassalle2018causal,bartl2021WSSP}. The definition adopted here corresponds to the case of filtrations generated by the processes themselves.} The bicausality constraint corresponds to the fact that
the conditional law of $\pi$ is still a coupling of the conditional laws of $\mu$ and $\nu$, that is $\pi_{x_{1:t},y_{1:t}} \in \cpl\big(\mu_{x_{1:t}}, \nu_{y_{1:t}}\big)$.
This concept
can be expressed in different equivalent ways, see e.g. \cite{backhoff2017causal,acciaio2020causal} in the context of transport, and \cite{bremaud1978changes} in the filtration enlargement framework. 

Restricting the transport problem to only bicausal couplings, we define the adapted Wasserstein distance.
\begin{definition}
For $p\geq 1$, the $p$-th order \textit{adapted Wasserstein distance} $\AW_{p}$ on $\calP_p(\R^{dT})$ is defined by\begin{equation*}\label{eq:adaptedwd}
        \AW_p^{p}(\mu,\nu) = \inf_{\pi \in \bccpl(\mu,\nu)}\int_{\R^{dT}\times\R^{dT}} |x-y|_p^p\, \pi(dx,dy).
    \end{equation*}
\end{definition}

In what follows, we define variational distances via general measurable functions $w : \R^{dT} \to [0,\infty)$, which we call weighting functions, and consider the following weighted versions of the total variation and adapted total variation distances.

\begin{definition}
    \label{def:wTV_wATV}
    Let $\mu,\nu \in \calP(\R^{dT})$ and $w$ be a weighting function. Then the \textit{weighted (adapted) total variation distances} are defined as
    \begin{align}
        \label{eq:def.weighted.TV}
        \TV_w(\mu,\nu) :=& \inf_{\pi \in \cpl(\mu,\nu)} \int_{\{x \neq y\}} (w(x) + w(y)) \, d\pi(x,y), \\
        \label{eq:def.weighted.ATV}
        \ATV_w(\mu,\nu) :=& \inf_{\pi \in \bccpl(\mu,\nu)} \int_{\{x \neq y\}} (w(x) + w(y)) \, d\pi(x,y).
    \end{align}
    In particular,
    \begin{itemize}
        \item when $w\equiv 1$, \eqref{eq:def.weighted.TV}  retrieves the \textit{total variation} 
        $\TV(\mu,\nu) := \int_{\R^{dT}} |\mu - \nu|(dx)$, where $|\mu - \nu| = \mu + \nu - 2(\mu \wedge \nu)$, and \eqref{eq:def.weighted.ATV}
        retrieves the \textit{adapted total variation}
        $\ATV(\cdot,\cdot)$ introduced in \cite{eckstein2024computational};  
        \item when $w(x) = 1+|x|_p^p$, $p \geq 1$, we call the $p$-th root of \eqref{eq:def.weighted.TV} and \eqref{eq:def.weighted.ATV} the \textit{$p$-th order (adapted) total variation distances}, given by
        \begin{align*}
            \TV_p(\mu,\nu) :=& \Big( \inf_{\pi \in \cpl(\mu,\nu)} \int_{\{x \neq y\}} (2 + |x|^p_p + |y|^p_p) \, d\pi(x,y) \Big)^\frac1p, \\
            \ATV_p(\mu,\nu) :=& \Big( \inf_{\pi \in \bccpl(\mu,\nu)} \int_{\{x \neq y\}} (2 + |x|^p_p + |y|^p_p) \, d\pi(x,y) \Big)^\frac1p.
        \end{align*}
    \end{itemize}
\end{definition}
The adapted total variational distance was first introduced in \cite{eckstein2024computational} as a tool to upper-bound the $\AW$-distance in the compact setting. In this work, we extend this concept to define weighted modifications of it, which allow us to establish an upper bound for the $\AW$-distance in the general case; see Theorem~\ref{thm:AWpTVp}.

\subsection{Estimating $\AW$-distance with $\TV$-distance}\label{ssec:bounds ATV}
\label{ssec:main_ATV_TV}
In this subsection, we first establish a precise relationship between the total variation distance and its adapted counterpart. Building on this, we subsequently derive an estimate for the adapted Wasserstein distance in terms of the total variation distance. 
To do so, we first require control over the weighting with respect to one of the marginal measures. This requirement is formalized through the following conditional weighting assumption.
    \begin{assumption}[Conditional weighting] \label{ass:conditional.moments} The distribution $\nu$ and weighting function $w$ satisfy the following condition:  there exist maps $w_t : \R^{dt} \to [0,\infty)$, $t = 1,\ldots,T$, with $w_T = w$, and constants $(c_t)_{t = 2}^T \in \mathbb R_{\geq 0}^{T-1}$ such that, for $\nu$-a.e.\ $x$ and $t = 2,\dots, T$,
\begin{equation}
        \label{eq:ass.cond.moments.1}
        w_{t - 1}(x_{1:t-1}) \leq w_t(x_{1:t})\quad \text{and}\quad    \int w_t(x_{1:t-1},\tilde x_t) \, \nu_{x_{1:t-1}}(d\tilde x_t)\leq (1+c_t) w_{t-1}(x_{1:t-1}).
\end{equation}
    \end{assumption}
Under this assumption, we derive the desired sharp estimate of the adapted total variation distance with respect to the total variation distance; see Sections~\ref{subsect:proofthm25} and \ref{subsec:sharpTVATV} for the proof.
\begin{theorem}\label{thm:weighted_main_inequality}
Let $\mu,\nu \in \calP(\R^{dT})$ and $w$ be a weighting function such that $(\nu,w)$ satisfies Assumption~\ref{ass:conditional.moments} and $\int w(x) \, d(\mu + \nu) < \infty$.
    Then we have
    \begin{equation}
        \label{eq:thm.weighted_main_inequality}
        \ATV_w(\mu,\nu) \le \Big( 1 + 2 \sum_{t = 1}^{T-1} \prod_{s = t + 1}^{T} (1 + c_s) \Big) \TV_w(\mu,\nu),
    \end{equation}
    where $(c_t)_{t = 2}^T$ are the constants in \eqref{eq:ass.cond.moments.1}.
    In particular,
    \begin{itemize}
        \item when $w\equiv 1$, choosing $w_{t-1}(x_{1:t-1}) = 1$ and $c_{t} = 0$, $t=2,\ldots,T$, \eqref{eq:thm.weighted_main_inequality} becomes 
        \begin{equation}
            \label{eq:thm.ATVTV}
            \ATV(\mu,\nu) \le (2T - 1) \TV(\mu,\nu);
        \end{equation}
    \item when $w(x) = 1+|x|_p^p$, $p \geq 1$, choosing
    \begin{equation}
     \label{eq:thm.ATVVp.constraint}
  w_{t-1}(x_{1:t-1}) = 1 + |x_{1:t-1}|_p^p~\text{ and }~  c_t = \nu\text{-}\operatorname{ess}\sup \frac{\mathbb E_\nu[ |X_t|^p | X_1,\ldots,X_{t-1} ]}{1 + \sum_{s=1}^{t-1}|X_{s}|^p}, \quad t=2,\ldots, T,
    \end{equation}
    \eqref{eq:thm.weighted_main_inequality} becomes 
    \begin{equation}
        \label{eq:thm.ATVpTVp}
     \ATV_p^p(\mu,\nu) \le \left( 1+2\sum_{t = 1}^{T-1} \prod_{s = t+1}^T (1 + c_s) \right) \TV_p^p(\mu,\nu).
     \end{equation}
    \end{itemize}
    Moreover, this bound is sharp, i.e., given any $c_t \geq 0$, $t=2,\dots,T$, we can construct $w$ s.t.\ there is a sequence $(\mu_n,\nu_n)_{n \in \mathbb N}\in \calP(\R^{dT})\times\calP(\R^{dT})$ with
    $\lim_{n\to \infty} \frac{\ATV_w(\mu_n,\nu_n)}{\TV_w(\mu_n,\nu_n)} = \Big( 1 + 2 \sum_{t = 1}^{T-1} \prod_{s = t + 1}^{T} (1 + c_s) \Big)$; see Theorem~\ref{ex:boundissharp}.
\end{theorem}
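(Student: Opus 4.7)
The strategy is to construct an explicit bicausal coupling of $\mu$ and $\nu$ via conditional maximal couplings on the diagonal, and then control its cost by a martingale identity followed by a telescoping decomposition of the adapted overlap.

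\emph{Step 1: the coupling and a martingale identity.} I define $\pi\in\bccpl(\mu,\nu)$ inductively: at time $1$ use the maximal coupling of $\mu_1,\nu_1$; for $t\ge 2$, given $(X_{1:t-1},Y_{1:t-1})$, use the maximal coupling of $\mu_{X_{1:t-1}}$ and $\nu_{X_{1:t-1}}$ on $\{X_{1:t-1}=Y_{1:t-1}\}$ and a product coupling otherwise. Introducing the adapted overlap
\[
\rho_T(dx_{1:T}) := (\mu_1\wedge\nu_1)(dx_1)\otimes(\mu_{x_1}\wedge\nu_{x_1})(dx_2)\otimes\cdots\otimes(\mu_{x_{1:T-1}}\wedge\nu_{x_{1:T-1}})(dx_T),
\]
the $X$-marginal of $\pi$ restricted to $\{X=Y\}$ coincides with $\rho_T$, and similarly for $Y$. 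Telescoping via the $\mu$-martingale $\tilde w_t(x_{1:t}):=\E_\mu[w\mid X_{1:t}=x_{1:t}]$ (and its $\nu$-analogue) then yields
\[
\ATV_w(\mu,\nu)\le\int (w(x)+w(y))\mathbf{1}_{x\ne y}\,d\pi=\int w\,d(\mu+\nu-2\rho_T)=\TV_w(\mu,\nu)+2\int w\,d(\mu\wedge\nu-\rho_T).
\]

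\emph{Step 2: controlling the adapted gap.} It remains to show
\[
\int w\,d(\mu\wedge\nu-\rho_T)\;\le\;\Bigl(\sum_{t=1}^{T-1}\prod_{s=t+1}^T(1+c_s)\Bigr)\TV_w(\mu,\nu).
\]
I telescope through intermediate measures $\rho^{(t)}$ on $\R^{dT}$, $t=1,\ldots,T+1$, defined by
\[
\rho^{(t)}(dz_{1:t-1},dx_{t:T}):=\rho_{t-1}(dz_{1:t-1})\otimes\bigl(\mu_z^{t:T}\wedge\nu_z^{t:T}\bigr)(dx_{t:T}),
\]
where $\mu_z^{t:T},\nu_z^{t:T}$ denote the conditional laws of $X_{t:T}$ given $X_{1:t-1}=z$ under $\mu,\nu$; one verifies $\rho^{(1)}=\mu\wedge\nu$ and $\rho^{(T+1)}=\rho_T$. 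For each $t$ the elementary pointwise estimate $\min(ac,bd)-\min(a,b)\min(c,d)\le d|a-b|$, applied to the one-step conditional densities at time $t$, gives
\[
\rho^{(t)}-\rho^{(t+1)}\;\le\;\rho_{t-1}(dz)\,|\mu_z-\nu_z|(dx_t)\,\nu^{t+1:T}_{z,x_t}(dx_{t+1:T}).
\]
Integrating $w$ against this inequality and iterating Assumption~\ref{ass:conditional.moments} over times $t+1,\ldots,T$ produces the factor $\prod_{s=t+1}^T(1+c_s)$, reducing matters to a bound of the form $\int w_t(z,x_t)\,\rho_{t-1}(dz)\,|\mu_z-\nu_z|(dx_t)\le\TV_w(\mu,\nu)$. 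This last estimate is obtained by rewriting the signed measure $\rho_{t-1}\otimes(\mu_z-\nu_z)$ as $(\mu_{1:t}-\nu_{1:t})$ plus correction terms supported where $\mu_{1:t-1}$, $\nu_{1:t-1}$, and $\rho_{t-1}$ disagree, and by exploiting the monotonicity $w_t\le w$ together with data-processing for total variation; the correction terms cancel against later and earlier contributions in the outer summation over $t$.

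\emph{Main obstacle and sharpness.} The principal difficulty is that Assumption~\ref{ass:conditional.moments} is imposed only on $\nu$, whereas the cost $w(x)+w(y)$ is symmetric in $\mu$ and $\nu$. This asymmetry is resolved precisely by the pointwise bound $\min(ac,bd)-\min(a,b)\min(c,d)\le d|a-b|$ used above, which routes all of the future contributions through the $\nu$-kernels (where the conditional weighting assumption applies) and leaves only the instantaneous total-variation mass $|\mu_z-\nu_z|$ to be controlled on the $\mu$-side via data processing. Sharpness of the constant is then deduced from a product-type family $(\mu_n,\nu_n)$ as in Example~\ref{ex:boundissharp}, constructed so as to simultaneously saturate the pointwise density inequality and the data-processing step used in Step 2.
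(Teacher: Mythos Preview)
Your Step~1 is correct and coincides with the paper's Lemmas~\ref{lem:TV.charlight}--\ref{lem:ATV.charlight}: the identity $\ATV_w(\mu,\nu)=\TV_w(\mu,\nu)+2\int w\,d(\mu\wedge\nu-\rho_T)$ is exactly the starting point. Your telescoping measures $\rho^{(t)}$ and the pointwise inequality $\min(ac,bd)-\min(a,b)\min(c,d)\le d|a-b|$ are also correct, and the passage through Assumption~\ref{ass:conditional.moments} to extract the factor $\prod_{s=t+1}^T(1+c_s)$ is fine.

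The gap is in the final estimate of Step~2, namely
\[
\int w_t(z,x_t)\,\rho_{t-1}(dz)\,|\mu_z-\nu_z|(dx_t)\;\le\;\TV_w(\mu,\nu),
\]
which you justify only by an appeal to ``correction terms cancelling''. This inequality is \emph{false} termwise. Take $T=3$, $w\equiv1$, state space $\{0,1\}$, $\mu_1=(0.4,0.6)$, $\nu_1=(0.6,0.4)$, conditionals $\mu_{X_1=0}=(1,0)$, $\nu_{X_1=0}=(0.6,0.4)$, $\mu_{X_1=1}=(0.6,0.4)$, $\nu_{X_1=1}=(1,0)$, and all $X_3$-conditionals equal. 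Then $\rho_1=\mu_1\wedge\nu_1=(0.4,0.4)$, $\int\rho_1\,\TV(\mu_z,\nu_z)=0.64$, whereas $\TV(\mu,\nu)=\TV(\mu_{1:2},\nu_{1:2})=0.56$. More importantly, your proposed cancellation cannot rescue the \emph{weighted} case: each term enters with a different coefficient $\prod_{s=t+1}^T(1+c_s)$, so an excess at one $t$ cannot in general be absorbed by a deficit at another. If one tracks your correction terms honestly, $(\mu_{1:t-1}-\rho_{t-1})$ unrolls recursively into $\sum_{s<t}\rho_{s-1}(\mu_{z}-\nu_{z})_+\,\mu^{s+1:t-1}$, feeding back into earlier $I_s$'s and producing a bound that grows exponentially in $T$, not linearly.

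This is precisely why the paper does \emph{not} telescope forward. Instead it writes $\lambda\,\TV_w-\ATV_w$ as an expectation of a functional $H$ of the density ratios $Y_t=D^1_t/D^2_t$ and runs a \emph{backward} induction: at each step one minimises $\mathbb{E}[H(l,\Delta W,\lambda,\kappa,a,b,Y)]$ over all admissible $(Y,\Delta W)$, and Lemma~\ref{lem:ATTV} computes this infimum exactly via a convex-hull argument. The resulting recursion on the pair $(\lambda_t,\kappa_t)$ is what forces the sharp constant $1+2\sum_{t=1}^{T-1}\prod_{s=t+1}^T(1+c_s)$. Your forward pointwise bound discards the constraints $\int Y_t=1$ and $\mathbb{E}[\Delta W_t]\le c_tW_{t-1}$ too early; the paper's optimisation keeps them and this is where the sharp constant comes from.
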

Notoriously, the set $\bccpl(\mu,\nu)$ of bicausal couplings can be considerably smaller than the set $\cpl(\mu,\nu)$ of all couplings. This makes the above result quite surprising, as we obtain that the $\ATV$ distance is bounded by the $\TV$ distance up to a constant that depends linearly on the number of time steps. However, between $\ATV_{p}$ and $\TV_{p}$, in general there exists no such estimate with constant scaling linearly in time. More precisely, for all $p \geq 1$, there exists no constant $C = C(T,p)$ such that the inequality $\ATV_{p}(\mu,\nu) \leq C \TV_{p}(\mu,\nu)$ holds for all $\mu, \nu \in \calP(\R^{dT})$. 
Therefore, $C$ has to depend on the sequence $(c_t)_{t=2}^{T}$ defined in \eqref{eq:thm.ATVVp.constraint}; see Example~\ref{ex:ct} for a counterexample.

\begin{remark}
    Similar to Wasserstein distances, one can define $\TV_w$ and $\ATV_w$ on general Polish spaces. Then the estimate in \eqref{eq:thm.weighted_main_inequality} still holds, with the same proof given in Section~\ref{sec:ATV_TV}, just adapting the notation to that of Polish spaces. 
\end{remark}

Building on the results above, we now provide an estimate for the adapted Wasserstein distance in terms of the total variation distance; see Subsection~\ref{subsec:ATV_TV} for the proof. 

\begin{corollary}
\label{thm:AWpTVp}
    Let $p\geq 1$, $\mu,\nu\in\calP_p(\R^{dT})$, and $(c_t)_{t = 2}^T$ as in \eqref{eq:thm.ATVVp.constraint}. Then
    \begin{equation*}
        \AW_p^p(\mu,\nu) \leq 2^p\ATV_p^p(\mu,\nu) \leq 2^p\Big( 1 + 2 \sum_{t = 1}^{T-1} \prod_{s = t + 1}^{T} (1 + c_s) \Big)\TV_p^p(\mu,\nu),
    \end{equation*}
    If, moreover, $\mu,\nu \in \calP_p(\calK)$, with $\calK\subseteq \mathcal B(\R^{dT})$ such that $\mathrm{diam}(\calK) := \max_{x,y \in \calK}|x-y|<\infty$, then
        \begin{equation*}
        \AW_p^p(\mu,\nu) \leq \mathrm{diam}(\calK)^p\ATV(\mu,\nu) \leq (2T - 1)\mathrm{diam}(\calK)^p\TV(\mu,\nu).
    \end{equation*}  
\end{corollary}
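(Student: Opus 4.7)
The plan is to treat this as a corollary whose content is essentially the cost comparison that converts a TV-type objective (integrating $w(x)+w(y)$ on $\{x\neq y\}$) into the $|x-y|_p^p$ objective of $\AW_p$; the heavy lifting is already done by \cref{thm:weighted_main_inequality}. I will handle the two chains of inequalities separately, each time using the same two ingredients: a pointwise comparison between the two cost functions on $\{x\neq y\}$, and an invocation of the appropriate bullet in \cref{thm:weighted_main_inequality}.

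For the first chain, I would start with any $\pi\in\bccpl(\mu,\nu)$ and, exploiting that $|x-y|_p^p$ vanishes on the diagonal, write $\int |x-y|_p^p\,d\pi=\int_{\{x\neq y\}}|x-y|_p^p\,d\pi$. The triangle inequality in $\ell^p$ combined with convexity of $t\mapsto t^p$ yields the pointwise bound
\begin{equation*}
|x-y|_p^p \;\le\; 2^{p-1}\bigl(|x|_p^p+|y|_p^p\bigr)\;\le\; 2^{p}\bigl(2+|x|_p^p+|y|_p^p\bigr).
\end{equation*}
Integrating and taking the infimum over $\pi\in\bccpl(\mu,\nu)$ gives $\AW_p^p(\mu,\nu)\le 2^p\,\ATV_p^p(\mu,\nu)$. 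The second inequality is then immediate from the second bullet of \cref{thm:weighted_main_inequality} applied with $w(x)=1+|x|_p^p$ and the prescribed constants $(c_t)$.

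For the compact case, I would use the cruder pointwise bound $|x-y|_p^p \le \mathrm{diam}(\calK)^p\,\mathbf 1_{\{x\neq y\}}$, which for any $\pi\in\bccpl(\mu,\nu)$ gives
\begin{equation*}
\int |x-y|_p^p\,d\pi \;\le\; \mathrm{diam}(\calK)^p\,\pi(\{x\neq y\}) \;=\; \tfrac12\,\mathrm{diam}(\calK)^p\int_{\{x\neq y\}}(1+1)\,d\pi.
\end{equation*}
Taking the infimum over $\pi\in\bccpl(\mu,\nu)$ yields $\AW_p^p(\mu,\nu)\le \tfrac12\mathrm{diam}(\calK)^p\,\ATV(\mu,\nu)\le \mathrm{diam}(\calK)^p\,\ATV(\mu,\nu)$. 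Then the first bullet of \cref{thm:weighted_main_inequality} (choice $w\equiv 1$, $c_t\equiv 0$) gives $\ATV(\mu,\nu)\le(2T-1)\TV(\mu,\nu)$, and concatenation produces the stated chain.

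There is no real obstacle here: once the swap from the $|x-y|_p^p$ cost to the TV-type integrand on $\{x\neq y\}$ is made, the corollary is purely a packaging of \cref{thm:weighted_main_inequality}. The only point worth stressing in the write-up is that the indicator structure of the ATV objective is exactly compatible with the vanishing of $|x-y|_p^p$ on the diagonal, which is what lets the comparison go through cleanly with a dimension-free (in fact $p$-dependent only) constant.
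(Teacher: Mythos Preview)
Your proposal is correct and follows essentially the same approach as the paper: both use the pointwise bound $|x-y|_p^p \le 2^p(2+|x|_p^p+|y|_p^p)$ on $\{x\neq y\}$ to get $\AW_p^p \le 2^p\ATV_p^p$, and in the bounded-support case the bound $|x-y|_p^p \le \mathrm{diam}(\calK)^p\mathbf 1_{\{x\neq y\}}$, before invoking the two bullets of \cref{thm:weighted_main_inequality}. Your observation that one actually obtains the slightly sharper $\tfrac12\mathrm{diam}(\calK)^p\ATV$ (then weakened to match the statement) is also implicit in the paper's argument.
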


\subsection{Estimating $\AW$-distance with $\W$-distance under Sobolev regularity}
\label{ssec:main_AW_W}
For probability measures on $\R^{dT}$ with densities having finite Sobolev norm, we provide estimates of the adapted Wasserstein distance in terms of the classical Wasserstein distance. The result is stated below, and the proof deferred to Section~\ref{sec:AW_W}.

\begin{definition}
\label{def:sob}
    For $k \in \N_0$ and $r\in [1, \infty]$, the Sobolev space $W^{k,r}(\R^{dT})$ is defined as 
    \begin{equation*}
        W^{k,r}(\R^{dT}) = \left\{u \in L^r(\R^{dT}) : 
         D^{{\alpha}}u\in L^r(\R^{dT})\; \forall\alpha \in \N^d_0 \text{ with } |{\alpha}| \leq k\right\},
    \end{equation*}
    and equipped with the Sobolev norm $\Vert u \Vert_{W^{k,r}} = \Vert u \Vert_{k,r} =  \sum_{|{\alpha}|\leq k} \Vert D^{{\alpha}}u \Vert_{r}$,\footnote{Here, we adopt the notion of Sobolev norm used in \cite{chae2020wasserstein}.} so, in particular, $\Vert \cdot \Vert_{0, r} = \Vert \cdot \Vert_r$.
\end{definition}
\begin{definition}
\label{def:kthorderkernel}
Let \( k \in \mathbb{N} \). A function \( K \colon \mathbb{R}^{dT} \to  \R \) is called a \emph{\(k\)-th order kernel} if it satisfies:
\begin{enumerate}[(i)]
    \item $\int_{\mathbb{R}^{dT}} K(x) \, dx = 1,$
    \item $\int_{\mathbb{R}^{dT}} K(x)\, x^{\alpha} \, dx = 0$ for all multi-indices $\alpha \in \mathbb{N}_0^{dT}$ with $1 \leq |\alpha| < k.$
\end{enumerate}
\end{definition}

\begin{theorem}
\label{thm:AWtoW}
Let \( p, q \geq 1 \), \( k \in \mathbb{N} \), and let \( K \) be a kernel of order \( k \). Suppose \( \mu, \nu \in \mathcal{P}(\mathbb{R}^{dT}) \) have densities \( f, g \in W^{k,1}(\mathbb{R}^{dT}) \), respectively. Then
\begin{equation}
\label{eq:thm:AWtoW.1}
    \begin{split}
        \AW_{p}^p(\mu,\nu) \leq C_0 C_1\W_q(\mu,\nu) + 2C_0\big(C_{k,K}^{\frac{1}{k}}C_2\big)^{\frac{k}{k+1}}\left(\|f_p \|_{k,1} + \|g_p \|_{k,1}\right)^{\frac{1}{k+1}}\W_q^{\frac{k}{k+1}}(\mu,\nu),
    \end{split}
\end{equation}
where $f_p(x) = (1+|x|_p^p)f(x)$, $g_p(x) = (1+|x|_p^p)g(x)$, and
\begin{align*}
    &C_0 \coloneqq 2^p\Big( 1 + 2 \sum_{t = 1}^{T-1} \prod_{s = t + 1}^{T} (1 + c_s) \Big),\qquad\quad
    C_1 \coloneqq \| K \|_1 p\left(M_{\frac{q(p-1)}{q-1}}^{\frac{q-1}{q}}(\mu) + M_{\frac{q(p-1)}{q-1}}^{\frac{q-1}{q}}(\nu)\right),\\
    &C_2 \coloneqq \mathrm{Lip}(K) \left(1+ p M_{\frac{qp}{q-1}}^{\frac{q-1}{q}}(\mu) + (p+1)M_{\frac{qp}{q-1}}^{\frac{q-1}{q}}(\nu)\right), ~ C_{k,K} \coloneqq \sup_{|\alpha| \leq k}\frac{1}{\alpha!}\int |K(z)z^\alpha|dz,
\end{align*}
with $(c_t)_{t = 2}^T \in R^{T-1}$ as in \eqref{eq:thm.ATVVp.constraint}.
In particular,
\begin{enumerate}[(i)]
    \item when $\mu,\nu \in \calP(\calK)$ with $\calK \subseteq \R^{dT}$ compact, we have
    \begin{equation}
    \label{eq:thm:AWtoW.2}
    \AW_{p}^p(\mu,\nu) \leq 2(2T - 1)\mathrm{diam}(\calK)^p\big(C_{k,K}^{\frac{1}{k}}\mathrm{Lip}(K)\big)^{\frac{k}{k+1}} \left(\|f \|_{k,1} + \|g \|_{k,1}\right)^{\frac{1}{k+1}} \W_1^{\frac{k}{k+1}}(\mu,\nu);
    \end{equation}
    \item when $\mu,\nu \in \calP(\calK)$ with $\calK \subseteq \R^{dT}$ compact, and $f,g$ are polynomial densities of order less than $k$ on $\calK$,
    \begin{equation}
    \label{eq:thm:AWtoW.3}
        \AW_p^p(\mu,\nu) \leq (2T - 1)\mathrm{diam}(\calK)^p\mathrm{Lip}(K) \W_1(\mu,\nu).
    \end{equation}
\end{enumerate}
Moreover, the order in \eqref{eq:thm:AWtoW.1} is sharp for $\AW_1$; see Example~\ref{ex:orderissharp}.
\end{theorem}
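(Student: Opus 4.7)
The strategy is to chain two reductions: $\AW_p^p(\mu,\nu)\le C_0\,\TV_p^p(\mu,\nu)$ from \cref{thm:AWpTVp}, followed by a bound of $\TV_p^p$ by $\W_q$ via kernel mollification. For the second reduction, I would first exploit the coupling that leaves $\mu\wedge\nu$ on the diagonal and transports the disjointly supported singular parts $\mu-\mu\wedge\nu$ and $\nu-\mu\wedge\nu$ arbitrarily; this yields
\[
\TV_p^p(\mu,\nu) \le \int(1+|x|_p^p)\,|f(x)-g(x)|\,dx \;=\; \|f_p-g_p\|_{L^1},
\]
reducing the task to controlling $\|f_p-g_p\|_{L^1}$ by $\W_q(\mu,\nu)$.

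For this $L^1$-bound I would introduce the mollifier $K_h(x):=h^{-dT}K(x/h)$, $h>0$, and split
\[
\|f_p-g_p\|_{L^1}\le\underbrace{\|f_p-f_p\star K_h\|_{L^1}+\|g_p-g_p\star K_h\|_{L^1}}_{(I)}+\underbrace{\|(f_p-g_p)\star K_h\|_{L^1}}_{(II)}.
\]
Part $(I)$ is the classical piece: Taylor-expanding $u(x)-u(x-hz)$ to order $k-1$ and using the vanishing-moment property of the $k$-th order kernel (\cref{def:kthorderkernel}) to annihilate the polynomial terms, the integral remainder delivers $\|u-u\star K_h\|_{L^1}\le C_{k,K}\,h^k\,\|u\|_{k,1}$ for $u\in W^{k,1}$, which I would apply to $u\in\{f_p,g_p\}$.

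Part $(II)$ is the delicate step. Writing $\phi_{h,x}(y):=K_h(x-y)(1+|y|_p^p)$ gives $(f_p-g_p)\star K_h(x)=\int\phi_{h,x}\,d(\mu-\nu)$, so for any $\pi\in\cpl(\mu,\nu)$ the integrand is controlled by $\int|\phi_{h,x}(y_1)-\phi_{h,x}(y_2)|\,d\pi$. The key manoeuvre is the telescoping identity
\[
\phi_{h,x}(y_1)-\phi_{h,x}(y_2)=[K_h(x-y_1)-K_h(x-y_2)](1+|y_2|_p^p)+K_h(x-y_1)(|y_1|_p^p-|y_2|_p^p),
\]
after which I would integrate over $x$ (Fubini), use $\int|K_h(x-y_1)-K_h(x-y_2)|\,dx\le\mathrm{Lip}(K)\,h^{-1}|y_1-y_2|$ and $\int K_h(x-y_1)\,dx=\|K\|_1$, apply the elementary bound $\bigl||y_1|_p^p-|y_2|_p^p\bigr|\le p(|y_1|^{p-1}+|y_2|^{p-1})|y_1-y_2|$, and finally invoke Hölder with conjugates $q,\,q'=q/(q-1)$ together with $\inf_\pi\|y_1-y_2\|_{L^q(\pi)}=\W_q(\mu,\nu)$ to conclude
\[
(II)\le C_1\,\W_q(\mu,\nu)+C_2\,h^{-1}\W_q(\mu,\nu),
\]
with exactly the constants of the theorem.

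Combining $(I)$ and $(II)$, then balancing the $h^k$ and $h^{-1}$ contributions via $h^{k+1}=C_2\W_q/[C_{k,K}(\|f_p\|_{k,1}+\|g_p\|_{k,1})]$ and multiplying by $C_0$, yields \eqref{eq:thm:AWtoW.1}. Case~(i) follows by instead invoking the second compact inequality of \cref{thm:AWpTVp} ($\AW_p^p\le\mathrm{diam}(\calK)^p\,\ATV$), which eliminates all weights and reduces $(II)$ to $h^{-1}\mathrm{Lip}(K)\W_1$ (no $C_1$ contribution); case~(ii) follows because polynomial densities of degree $<k$ have vanishing $k$-th order derivatives, so $(I)=0$ and taking $h=1$ leaves only $\mathrm{Lip}(K)\W_1$. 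Sharpness of the exponent $k/(k+1)$ is handled separately via the explicit construction of \cref{ex:orderissharp}. The main obstacle is step~$(II)$: the weight $(1+|y|_p^p)$ blocks a direct appeal to the unweighted TV-to-$\W$ bound of \cite{chae2020wasserstein}, and the telescoping split must be arranged so that the factor requiring Lipschitz control of $K$ carries the weight (bounded by a $\nu$-moment) while the factor controlled by $\|K\|_1$ absorbs the weight's derivative $p(|y_1|^{p-1}+|y_2|^{p-1})$---this is precisely what produces the moment exponents in $C_1$ and $C_2$.
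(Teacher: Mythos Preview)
Your proposal is correct and follows the paper's architecture exactly: reduce $\AW_p^p$ to $\TV_p^p$ via \cref{thm:AWpTVp}, identify $\TV_p^p(\mu,\nu)=\|f_p-g_p\|_1$, mollify and split into the bias term $(I)$ handled by Taylor expansion plus the vanishing-moment property, and the cross term $(II)$, then balance in $h$. The only real deviation is in your treatment of $(II)$: the paper first passes to the dual, writing $\|K_h\ast(f_p-g_p)\|_1=\sup_{|\phi|\le 1}\int(\phi\ast K_h)\,w\,d(\mu-\nu)$, splits off the value at $0$, and only then introduces a coupling; you instead insert the coupling immediately, telescope $\phi_{h,x}(y_1)-\phi_{h,x}(y_2)$, and integrate in $x$ via Fubini. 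Your route is slightly more direct and in fact produces a smaller constant than the paper's $C_2$ (your $h^{-1}$ term only sees $\mathrm{Lip}(K)(1+M_{pq'}^{1/q'}(\nu))$, because the weight-difference piece lands in the $O(1)$ term), so ``exactly the constants of the theorem'' is a mild overstatement---you get something at least as good. One small caution: the estimate $\int|K_h(x-y_1)-K_h(x-y_2)|\,dx\le \mathrm{Lip}(K)\,h^{-1}|y_1-y_2|$ really yields $\|\nabla K\|_1\,h^{-1}|y_1-y_2|$ after the change of variables and FTC; the paper's duality argument has the same looseness in naming the constant $\mathrm{Lip}(K)$, so this does not separate your argument from theirs.
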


\subsection{Fast rate estimator for $\AW_1$-distance}
In this subsection, we construct an estimator of the $\AW_1$-distance which is based on kernel density estimation, and whose rate of convergence improves as the smoothness \( k \) of the underlying measure \( \mu \) increases. 
The result is stated below, and the proof deferred to Section~\ref{sec:fast}. We briefly introduce some notation from the theory of wavelets, and refer the reader to 
Appendix~\ref{app:wavelets} for details and references. 
To define a basis over the unit cube $[0,1]^{dT}$, we focus on the boundary-corrected 
$N$-th Daubechies wavelet system, for an integer $N \ge 2$, as introduced by \cite{cohen1993wavelets}. 
In short, given an integer $j_0 \ge \log_2 N$, their construction leads to respective families 
of scaling and wavelet functions 
\[
\Phi
= \{ \zeta_{j_0k} : 0 \le k \le 2^{j_0} - 1 \}, 
\quad
\Psi_j 
= \{ \xi_{jk\ell} : 0 \le k \le 2^{j_0} - 1,\, \ell \in \{0,1\}^d \setminus \{0\} \}, 
\quad j \ge j_0,
\]
such that $\Psi = \Phi \cup \bigcup_{j = j_0}^{\infty} \Psi_j$ forms an orthonormal basis of $L^2([0,1]^{dT})$, 
with the property that $\Phi$ spans all polynomials of degree at most $N - 1$ over $[0,1]^{dT}$. 

\begin{definition}[Besov space]
\label{def:besov}
Let $s > 0$, and $f \in L^\infty([0,1]^{dT})$ admit the wavelet expansion
\[
f = \sum_{\zeta \in \Phi} \beta_\zeta \zeta + \sum_{j=j_0}^{\infty}\sum_{\xi \in \Psi_j}\beta_\xi \xi\quad \text{over $[0,1]^{dT}$,}  
\]
with convergence in $L^\infty([0,1]^{dT})$, where $\beta_\xi = \int \xi(x) f(x)dx$  for all $\xi \in \Psi$. The Besov norm of $f$ is defined by 
\[
\Vert f \Vert_{\mathcal{B}_{\infty,\infty}^{s}([0,1]^{dT})} = \Vert (\beta_\zeta)_{\zeta \in \Phi} \Vert_{\infty} + \left\Vert \left(2^{j(s+\frac{dT}{2})}\Vert (\beta_\xi)_{\xi \in \Psi_j}\Vert_{\infty}\right)_{j \geq j_0}\right\Vert_{\infty},
\]
and we set
\[
\mathcal{B}_{\infty, \infty}^s([0,1]^{dT}) = \left\{f \in L^\infty([0,1]^{dT})\colon \Vert f \Vert_{\mathcal{B}_{\infty,\infty}^s([0,1]^{dT})} < \infty \right\}.
\]
\end{definition}
For $\mu \in \calP(\R^{dT})$ and $n\in\N$, let $(X^{(i)})_{i=1}^{n}$ be i.i.d.\ samples of $\mu$. Define the empirical measure $\mu^n = \frac{1}{n}\sum_{i=1}^{n}\delta_{X_i}$ and the wavelet density estimator
\[
\tilde f_n = \sum_{\zeta \in \Phi}\hat \beta_\zeta^n \zeta + \sum_{j=j_0}^{J_n} \sum_{\xi \in \Psi_{j}} \hat\beta^n_\xi \xi,
\]
where $J_n \geq j_0$ is a deterministic threshold, and  $\hat \beta_\zeta^n = \int \zeta d\mu^n$,  $\hat\beta^n_\xi = \int \xi d\mu^n$ for all $\zeta \in \Phi$, $\xi \in \Psi_j$, $j_0 \leq j \leq J_n$. Let \(\varphi\) denote the standard Gaussian density function. We define the random measure
\begin{equation}\label{eq:munhat}
\hat \mu_n(dx) = \hat f_n(x)dx,
\end{equation}
where
\[
\hat f_n(x) = 
\begin{cases}
    \tilde f_n(x), &  \text{if $\tilde f_n(x)$ is non-negative,}\\
    \varphi(x - X^{(1)}),  & \text{otherwise}.
\end{cases}
\]
We adopt the notation  \( a \lesssim b \) to indicate that there exists a constant \( C \) independent of the number of samples $n$, such that \( a \leq Cb \), and write \( a \simeq b \) to indicate that \( a \lesssim b \) and \( b \lesssim a \).

\begin{theorem}
\label{thm:fast_rate_compact}
Let $dT\geq 3$, $s > k \in \N$, $\gamma > 0$, $\mu \in \calP([0,1]^{dT})$ with density $f \in \mathcal{B}^s_{\infty, \infty}([0,1]^{dT})$. Assume further that $f \geq 1/\gamma$ over $[0,1]^{dT}$, and let $2^{J_n} \simeq n^{1/(2s+dT)}$. Then
\begin{equation}
\label{eq:fast_rate}
    \E\big[\AW_1(\mu,\hat \mu_n)\big] \lesssim n^{-\frac{k}{2k + dT}\frac{k}{k+1}}.
    \end{equation}
\end{theorem}
When the dimension $dT$ is fixed, increasing the smoothness parameter $k$ leads to an improved convergence rate---transitioning from the slow, dimension-dependent rate to the Monte-Carlo rate (\( n^{-1/2} \)).
\begin{remark}
    Let \(\varphi\) denote the standard Gaussian density function, and  define the kernel density estimator by $
    \tilde{f}_{n,h}(x) = \frac{1}{n} \sum_{i=1}^{n} \frac{1}{h} \varphi\left( \frac{x - X^{(i)}}{h} \right),
    $ which induces the Gaussian-smoothed empirical measure \(\tilde{\mu}_{n,h}\) with density \(\tilde{f}_{n,h}\). The convergence rate of $\tilde{\mu}_{n,h}$ is established in \cite{hou2024convergence} when $\mu$ has Lipschitz conditional kernels.
    For $L>0$ and $\mu \in \calP_1(\R^{dT})$, $\mu\in\mathcal{P}_1(\R^{dT})$ is said to have $L$-Lipschitz conditional kernels if it admits a disintegration such that, for all $t=1,\dots,T-1$, $x_{1:t} \mapsto \mu_{x_{1:t}}$ is $L$-Lipschitz (where $\calP(\R^{d})$ is equipped with $\W_1$). Then, if $h \simeq n^{-\frac{1}{dT+2}}$, we have
    \begin{equation}
        \label{eq:slow_rate}
        \E\big[\AW_{1}(\mu , \tilde{\mu}_{n,h}) \big] \lesssim n^{-\frac{1}{dT+2}}.
    \end{equation}
    Comparing \eqref{eq:fast_rate} and \eqref{eq:slow_rate} for $dT\ge 3$, \eqref{eq:slow_rate} is faster when $k=1$, whereas \eqref{eq:fast_rate} is faster when $k\ge 3$. For $k = 2$, \eqref{eq:slow_rate} is faster at $dT = 3$, the two rates coincide at $dT = 4$, and \eqref{eq:fast_rate} becomes faster for $dT > 4$. To summarize, \eqref{eq:fast_rate} and \eqref{eq:slow_rate} rely on different assumptions, and neither rate dominates the other globally for all $k\geq 1$ and $dT\geq 3$.
\end{remark}

\section{Linking $\ATV$-distance with $\TV$-distance}
\label{sec:ATV_TV}
The goal of this section is to prove Theorem~\ref{thm:weighted_main_inequality} and Theorem~\ref{thm:AWpTVp}.
To this end, we now introduce a probabilistic setting which allows us to work with density processes associated with the marginals $\mu$ and $\nu$.
More precisely, we consider the canonical setting where $\Omega=\R^{dT}$ is the path space, $X$ the canonical process, $\mathbb F=(\mathcal F_t)_{t=1}^T$ the canonical filtration generated by $X$, and $\mathbb P$ a probability measure on $(\Omega, \mathcal F)$, with $\mathcal F=\mathcal F_T$, that dominates both $\mu$ and $\nu$. We denote the random variables associated with the respective densities by 
\[ Z^1\coloneqq\dfrac{d\mu}{d\mathbb P} \quad\text{and}\quad Z^2\coloneqq\dfrac{d\nu}{d\mathbb P}, \]
and define the density processes $(Z^i_t)_{t=1}^{T}$, $i=1,2$, so that $Z^i_t=\mathbb E[Z^i|\mathcal F_t]$, with the convention that $Z^i_0=1$.
Moreover, we define the random variables associated with the conditional densities 
\[ D^i_t=\frac{Z^i_t}{Z^i_{t-1}}, \quad t =1,\ldots,T,\quad i=1,2,\]
with the convention that $D_t^i = 1$ if $Z_{t - 1}^i = 0$,
which are $\mathbb P$-almost surely well-defined. 
Intuitively speaking, $Z_t^1$ is the density associated with the marginal distribution $\mu_{1:t}$ and $D_t^1$ is density associated with the conditional distribution $\mu_{X_{1:t-1}}$, $t = 1,\dots,T$, and analogously for $Z_t^2$ and $D_t^2$.
    
\subsection{Representation of $\TV$-distance and $\ATV$-distance}\label{subsec.ATV}

Let $w : \R^{dT} \to [0,\infty)$ be a general weighting function, used to define $\TV_w$ and $\ATV_w$ distances in Definition~\ref{def:wTV_wATV}. We can explicitly solve the optimal transport problem of $\TV_w$ and $\ATV_w$, i.e. \eqref{eq:def.weighted.TV} and \eqref{eq:def.weighted.ATV}. This enables us to rewrite these distances via the density processes associated with $\mu$ and $\nu$.
    \begin{lemma}
        \label{lem:TV.charlight}
        Let $\mu,\nu \in \mathcal P(\R^{dT})$. Then
        \begin{equation}
            \label{eq:lem.TV.charlight.0}
            \TV_w(\mu,\nu) = \int w\ d\mu+\int w\ d\nu-2\int w\ d(\mu\wedge\nu) = \E\Big[ w(X) \big(Z^1 + Z^2 - 2\min(Z^1, Z^2)\big)\Big],
        \end{equation}
        and $\pi^*$ is an optimizer of \eqref{eq:def.weighted.TV} if and only if $\pi^* = (\mathbf{id},\mathbf{id})_\# (\mu\wedge \nu)$ on $\{x=y\}$.
    \end{lemma}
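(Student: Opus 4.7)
The plan is to solve the optimal transport problem for $\TV_w$ directly by inspecting the cost $c(x,y) := (w(x)+w(y))\mathbf{1}_{\{x \neq y\}}$, which vanishes precisely on the diagonal. Since mass placed on the diagonal costs nothing, we will minimise by pushing as much mass as possible onto $\{x=y\}$, and the key classical fact I will invoke is that for any $\pi \in \cpl(\mu,\nu)$ the measure $\rho_\pi$ on $\R^{dT}$ defined by $\rho_\pi(A) := \pi(\{(x,x) : x \in A\})$ satisfies $\rho_\pi \le \mu$ and $\rho_\pi \le \nu$, hence $\rho_\pi \le \mu \wedge \nu$ by the very definition of the lattice minimum.

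The first step is the upper bound: I construct an explicit candidate $\pi^\star$ by splitting mass as
\[
\pi^\star := (\id,\id)_\# (\mu \wedge \nu) + \tilde\pi,
\]
where $\tilde\pi$ is any coupling of the mutually singular marginals $\mu - \mu\wedge\nu$ and $\nu - \mu\wedge\nu$ (for instance the product measure, normalised). Since $\tilde\pi$ is concentrated on $\{x \neq y\}$, evaluating the cost gives
\[
\int_{\{x \neq y\}} (w(x)+w(y))\, d\pi^\star = \int w\, d(\mu-\mu\wedge\nu) + \int w\, d(\nu-\mu\wedge\nu) = \int w\, d\mu + \int w\, d\nu - 2\int w\, d(\mu\wedge\nu).
\]
The second step is the matching lower bound. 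For an arbitrary $\pi \in \cpl(\mu,\nu)$, I rewrite
\[
\int_{\{x\neq y\}}(w(x)+w(y))\, d\pi = \int(w(x)+w(y))\, d\pi - 2\int_{\{x=y\}} w(x)\, d\pi = \int w\, d\mu + \int w\, d\nu - 2\int w\, d\rho_\pi,
\]
and then use the inequality $\rho_\pi \le \mu\wedge\nu$ together with $w \ge 0$ to conclude. The characterisation of optimisers then drops out: equality forces $\int w\, d\rho_\pi = \int w\, d(\mu\wedge\nu)$, which, together with $\rho_\pi \le \mu\wedge\nu$, yields $\rho_\pi = \mu\wedge\nu$ on $\{w > 0\}$; modulo the usual zero-weight caveat, this is exactly the statement that $\pi^\star$ restricted to the diagonal equals $(\id,\id)_\#(\mu\wedge\nu)$.

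The third step is the probabilistic rewriting. Working under a dominating measure $\mathbb P$ with densities $Z^1, Z^2$, the standard identity $\mu \wedge \nu = \min(Z^1,Z^2)\cdot\mathbb P$ gives $\int w\, d(\mu \wedge \nu) = \E[w(X)\min(Z^1,Z^2)]$, while $\int w\, d\mu = \E[w(X) Z^1]$ and likewise for $\nu$. Substituting into the first equality produces the claimed $\E[w(X)(Z^1+Z^2-2\min(Z^1,Z^2))]$.

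I do not expect a substantive obstacle: the argument is essentially the classical proof that $\TV(\mu,\nu) = 2(1 - (\mu\wedge\nu)(\R^{dT}))$ adapted with the weight $w$. The only point requiring a hint of care is the measurability of $\{x=y\}$ (closed in the Polish space $\R^{dT}\times\R^{dT}$, so Borel) and the justification that $\rho_\pi$ as defined is a well-defined finite Borel measure, which follows from standard monotone class / Fubini arguments.
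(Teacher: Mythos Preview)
Your proposal is correct and follows essentially the same route as the paper: both rewrite the cost as $\int w\,d\mu + \int w\,d\nu - 2\sup_\pi \int_{\{x=y\}} w(x)\,d\pi$ and identify the supremum as $\int w\,d(\mu\wedge\nu)$, attained precisely when the diagonal part of $\pi$ equals $(\id,\id)_\#(\mu\wedge\nu)$. The paper's proof is terse and simply asserts this identification, whereas you spell out the two inequalities via the explicit coupling $\pi^\star$ and the bound $\rho_\pi \le \mu\wedge\nu$; your caveat about the set $\{w=0\}$ in the optimiser characterisation is a fair observation that the paper glosses over.
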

    \begin{proof}
        Notice that
        \begin{equation}
        \label{eq:lem.TV.charlight.1}
        \begin{split}
            \TV_w(\mu,\nu) 
            &= \inf_{\pi \in \cpl(\mu,\nu)} \int_{\{x \neq y\}} (w(x) + w(y)) \, d\pi(x,y)\\
            &=  \int w\ d\mu+\int w\ d\nu - \sup_{\pi \in \cpl(\mu,\nu)}\int 2w(x)\mathbbm{1}_{\{x = y\}} \, d\pi(x,y)\\
            &=  \int w\ d\mu+\int w\ d\nu-2\int w\ d(\mu\wedge\nu) = \E\Big[ w(X) \big(Z^1 + Z^2 - 2\min(Z^1, Z^2)\big)\Big],
        \end{split}
        \end{equation}
        where the supremum in \eqref{eq:lem.TV.charlight.1} is attained by any $\pi^\star$ that satisfies $\pi^\star = (\mathbf{id},\mathbf{id})_\# (\mu\wedge \nu)$ on $\{x=y\}$.
    \end{proof}
    Next, we extend Lemma~\ref{lem:TV.charlight} to its adapted counterpart, by employing the dynamic programming principle (DPP) of bicausal optimal transport; see Proposition~5.2. in \cite{backhoff2017causal}.
    \begin{lemma}\label{lem:ATV.charlight}
        Let $\mu,\nu \in \mathcal P(\R^{dT})$. Then
        \begin{equation}
        \label{eq:lem.ATV.charlight.0}
        \begin{split}
            \ATV_w(\mu,\nu) &=
            \int w \, d\mu + \int w \, d\nu - 2\int w\, d\Big( 
            \mu_1 \wedge \nu_1 \prod_{t = 1}^{T-1} \mu_{x_{1:t}} \wedge \nu_{y_{1:t}}\Big)\\
            &= \mathbb E\Big[ w(X) \Big( Z^1 + Z^2 - 2 \prod_{t = 1}^T \min(D^1_t, D^2_t) \Big) \Big],
        \end{split}
        \end{equation}
        and $\pi^*= \prod_{t=0}^{T-1}\pi^*_{x_{1:t},y_{1:t}}$ is an optimizer of \eqref{eq:def.weighted.ATV} if and only if $\pi^*_{x_{1:t},x_{1:t}} = (\mathbf{id},\mathbf{id})_\# (\mu_{x_{1:t}}\wedge \nu_{x_{1:t}})$ on $\{x_{t+1}=y_{t+1}\}$ for all $x_{1:t} \in \R^{dt}$, $t=0,\ldots, T-1$.
    \end{lemma}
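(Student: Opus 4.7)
The plan is to reduce to Lemma~\ref{lem:TV.charlight} via the dynamic programming principle (DPP) for bicausal optimal transport (Proposition~5.2 in \cite{backhoff2017causal}). First, exactly as in the proof of Lemma~\ref{lem:TV.charlight}, I would rewrite
\begin{equation*}
\ATV_w(\mu,\nu) = \int w\,d\mu + \int w\,d\nu - 2\sup_{\pi \in \bccpl(\mu,\nu)} \int w(x)\,\mathbbm 1_{\{x=y\}}\,d\pi(x,y),
\end{equation*}
using that the marginal integrals of $w$ are the same for every coupling. The key observation is that the cost $w(x)\mathbbm 1_{\{x=y\}} = w(x)\prod_{t=1}^T \mathbbm 1_{\{x_t=y_t\}}$ factorizes over time, which is precisely the structure that makes bicausal DPP effective.

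Next, I would carry out a backward induction on the DPP value functions. Define $V_T(x_{1:T},y_{1:T}) := w(x_{1:T})\mathbbm 1_{\{x_{1:T}=y_{1:T}\}}$ and, for $t=T-1,\ldots,0$,
\begin{equation*}
V_t(x_{1:t},y_{1:t}) := \sup_{\pi_t \in \cpl(\mu_{x_{1:t}},\nu_{y_{1:t}})} \int V_{t+1}(x_{1:t+1},y_{1:t+1})\,d\pi_t(x_{t+1},y_{t+1}).
\end{equation*}
The inductive claim I would prove is $V_t(x_{1:t},y_{1:t}) = \mathbbm 1_{\{x_{1:t}=y_{1:t}\}}\,U_t(x_{1:t})$, where
\begin{equation*}
U_t(x_{1:t}) = \int w(x_{1:t},x_{t+1:T})\prod_{s=t}^{T-1}(\mu_{x_{1:s}}\wedge \nu_{x_{1:s}})(dx_{s+1}),
\end{equation*}
with $U_T = w$ providing the base case. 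For the step from $t+1$ to $t$, if $x_{1:t}\neq y_{1:t}$ the indicator is zero and $V_t=0$; if $x_{1:t}=y_{1:t}$, one applies Lemma~\ref{lem:TV.charlight} (the $T=1$ case) to the one-step problem on $\R^d$ with marginals $\mu_{x_{1:t}}, \nu_{x_{1:t}}$ and weighting $\tilde w(\cdot) = U_{t+1}(x_{1:t},\cdot)$, which yields exactly $\int U_{t+1}(x_{1:t},\cdot)\,d(\mu_{x_{1:t}}\wedge \nu_{x_{1:t}}) = U_t(x_{1:t})$.

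Finally, the DPP gives $\sup_{\pi\in\bccpl(\mu,\nu)}\int w(x)\mathbbm 1_{\{x=y\}}\,d\pi = \int U_0\,d(\mu_1\wedge\nu_1)\cdots$, which is precisely $\int w\,d\big(\mu_1\wedge\nu_1\prod_{t=1}^{T-1}\mu_{x_{1:t}}\wedge\nu_{y_{1:t}}\big)$. Plugging this into the decomposition yields the first equality of \eqref{eq:lem.ATV.charlight.0}. The second equality follows from the fact that $\min(D^1_t,D^2_t)$ is, by construction of the density processes, the $\mathbb P_{X_{1:t-1}}$-density of $\mu_{X_{1:t-1}}\wedge\nu_{X_{1:t-1}}$; taking the product over $t$ and integrating $w(X)$ under $\mathbb P$ produces exactly the nested integral. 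The optimizer characterization is immediate from the DPP combined with the optimizer clause of Lemma~\ref{lem:TV.charlight}: a bicausal $\pi^*$ attains the supremum iff each one-step kernel $\pi^*_{x_{1:t},x_{1:t}}$ attains the corresponding one-step supremum, which by Lemma~\ref{lem:TV.charlight} occurs precisely when $\pi^*_{x_{1:t},x_{1:t}} = (\id,\id)_\#(\mu_{x_{1:t}}\wedge\nu_{x_{1:t}})$ on $\{x_{t+1}=y_{t+1}\}$.

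The main obstacles are essentially bookkeeping: (i) verifying measurability of the value functions $V_t$ and the kernels $x_{1:t}\mapsto \mu_{x_{1:t}}\wedge\nu_{x_{1:t}}$ so that the DPP applies rigorously and the product-kernel measure $\lambda$ is well-defined; and (ii) reconciling the formal notation $\mu_{x_{1:t}}\wedge\nu_{y_{1:t}}$ in the statement with the fact that only the diagonal $x_{1:t}=y_{1:t}$ contributes, so the minimum is effectively $\mu_{x_{1:t}}\wedge\nu_{x_{1:t}}$. Both are standard and follow from measurable selection / disintegration results in the literature cited in the excerpt.
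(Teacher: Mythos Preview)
Your proposal is correct and follows essentially the same approach as the paper: rewrite $\ATV_w$ as the marginal integrals minus twice a bicausal supremum, then identify that supremum via the DPP for bicausal transport combined with Lemma~\ref{lem:TV.charlight} applied at each one-step layer. The only cosmetic difference is that the paper runs an induction on the horizon $T$ (peeling off the first time step via DPP and invoking the $(T-1)$-case as the induction hypothesis), whereas you run a backward recursion on the value functions $V_t$; both arguments use the same ingredients and yield the same optimizer characterization.
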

    \begin{proof}
    Similar to the proof of Lemma~\ref{lem:TV.charlight}, we notice that 
    \begin{equation}
    \label{eq:lem.ATV.charlight.0.5}
    \begin{split}
        \ATV_w(\mu,\nu) 
        &=  \int w(x)d\mu(x)+\int w(y)d\nu(y) - \sup_{\pi \in \bccpl(\mu,\nu)}\int 2w(x)\mathbbm{1}_{\{x = y\}} \, d\pi(x,y).
    \end{split}
    \end{equation}
    Therefore, it is sufficient to prove that
    \begin{equation}
    \label{eq:lem.ATV.charlight.1}
    \begin{split}
        \sup_{\pi \in \bccpl(\mu,\nu)}\int w(x)\mathbbm{1}_{\{x = y\}} \, d\pi(x,y) = \int w(x)\, d\Big( 
            \mu_1 \wedge \nu_1 \prod_{t = 1}^{T-1} \mu_{x_{1:t}} \wedge \nu_{y_{1:t}}\Big)(x),
    \end{split}
    \end{equation}
    as this readily implies \eqref{eq:lem.ATV.charlight.0}. In what follows, we prove \eqref{eq:lem.ATV.charlight.1} by induction: When $T=1$, $\ATV = \TV$, so \eqref{eq:lem.ATV.charlight.1} holds directly by Lemma~\ref{lem:TV.charlight}. Now, assuming that \eqref{eq:lem.ATV.charlight.1} holds for $T-1$, then
    \begin{equation*}
        \begin{split}
            &\quad~ \sup_{\pi \in \bccpl(\mu,\nu)}\int w(x)\mathbbm{1}_{\{x = y\}} \, d\pi(x,y)\\
            &\stackrel{\text{(by DPP)}}{=} \sup_{\pi_1 \in \cpl(\mu_1,\nu_1)} \int \mathbbm{1}_{\{x_1 = y_1\}}\Big[ \sup_{\pi_{x_1,y_1} \in \cpl(\mu_{x_1},\nu_{y_1})} \int  w(x)\mathbbm{1}_{\{x_{2:T} = y_{2:T}\}}d\pi_{x_1,y_1}(x_{2:T},y_{2:T})\Big]d\pi_1(x_1,y_1)\\
            &\stackrel{\text{(by assumption)}}{=} \sup_{\pi_1 \in \cpl(\mu_1,\nu_1)} \int \mathbbm{1}_{\{x_1 = y_1\}}\Big[ \int w(x_1,x_{2:T})\, d\Big( 
            \mu_{x_1} \wedge \nu_{y_1} \prod_{t = 2}^{T-1} \mu_{x_{1:t}} \wedge \nu_{y_{1:t}}\Big)(x_{2:T})\Big]d\pi_1(x_1,y_1) \\
            &= \int w(x)\, d\Big( 
            \mu_1 \wedge \nu_1 \prod_{t = 1}^{T-1} \mu_{x_{1:t}} \wedge \nu_{y_{1:t}}\Big)(x) \quad \text{(by Lemma~\ref{lem:TV.charlight})},
        \end{split}
    \end{equation*}
    where the supremum in the last equality is attained by any $\pi_1^*$ that satisfies $\pi_1^* = (\mathbf{id},\mathbf{id})_\# (\mu_1\wedge \nu_1)$ on $\{x_1=y_1\}$. By rewriting the equation \eqref{eq:lem.ATV.charlight.0.5} with density processes, we get
    \begin{multline*}
        \int w \, d\mu + \int w \, d\nu - \int 2w\, d\Big(\mu_1 \wedge \nu_1 \prod_{t = 1}^{T-1} \mu_{x_{1:t}} \wedge \nu_{y_{1:t}}\Big)\\
        = \mathbb E\Big[ w(X) \Big( Z^1 + Z^2 - 2 \prod_{t = 1}^T \min(D^1_t, D^2_t) \Big) \Big].
    \end{multline*}
    Therefore, we establish \eqref{eq:lem.ATV.charlight.0} and the characterization of optimizers follows analogously.
    \end{proof}
\begin{remark}
    The proof of Lemma~\ref{lem:TV.charlight} (resp. Lemma~\ref{lem:ATV.charlight}) generalizes to the following setting:
    Let $w_1\colon \R^{dT} \to [0,\infty), w_2 \colon \R^{dT} \to [0,\infty)$ be non-negative weighting functions. Then we have
    \begin{align*}
        \inf_{\pi \in \cpl(\mu,\nu)} \int_{\{x \neq y\}} w_1(x) + w_2(y) \, d\pi(x,y) = \int w_1 \, d\mu + \int w_2 \, d\nu - \int w_1 + w_2 \, d(\mu \wedge \nu),
    \end{align*}
    and
    \begin{multline*}
        \inf_{\pi \in \bccpl(\mu,\nu)} \int_{\{x \neq y\}} w_1(x) + w_2(y) \, d\pi(x,y)\\
        = \int w_1 \, d\mu + \int w_2 \, d\nu - \int w_1 + w_2 \, d\Big(\mu \wedge \nu \prod_{t = 1}^{T-1} \mu_{x_{1:t}} \wedge \nu_{x_{1:t}} \Big).
    \end{multline*}
\end{remark}

\subsection{Proof of the estimate \eqref{eq:thm.weighted_main_inequality} in Theorem~\ref{thm:weighted_main_inequality} and of Theorem~\ref{thm:AWpTVp}}\label{subsect:proofthm25}

\label{subsec:ATV_TV}

Now, in order to prove Theorem~\ref{thm:weighted_main_inequality}, our goal is to find $\lambda\in \R$ such that 
\[
\min_{\mu,\nu}\lambda\TV_w(\mu,\nu) - \ATV_w(\mu,\nu) \geq 0.
\]
Notice that, since $Z^1 + Z^2 = |Z^1 - Z^2| + 2\min(Z^1, Z^2)$, we can rewrite $\lambda\TV_w(\mu,\nu) - \ATV_w(\mu,\nu)$ using the representations established in Section~\ref{subsec.ATV}:
\begin{equation*} 
    \begin{split}
        &\quad~\lambda\TV_w(\mu,\nu) - \ATV_w(\mu,\nu) \\
        &= \lambda\, \mathbb E\Big[ w(X) \Big( Z^1 + Z^2 - 2\min(Z^1, Z^2) \Big) \Big] - \mathbb E\Big[ w(X) \Big( Z^1 + Z^2 - 2 \prod_{t = 1}^T \min(D^1_t, D^2_t) \Big) \Big]\\
        &= \mathbb E\Big[ w(X) \Big( (\lambda - 1) | Z^1 - Z^2 | + 2 \prod_{t = 1}^T \min(D^1_t, D^2_t) - 2 \min(Z^1, Z^2)  \Big) \Big].
    \end{split}
\end{equation*}
For now we assume $\mu \ll \nu$ so that all notation below are well defined. Recalling that $Z^i = \prod_{t=1}^{T}D_t^i$, $i=1,2$, we can further rewrite 
\begin{equation*}
\begin{split}
    &\quad~\lambda\TV_w(\mu,\nu) - \ATV_w(\mu,\nu)\\
    &= \mathbb E\Big[ w(X) \Big( (\lambda - 1) \Big| \frac{Z^1}{Z^2} - 1 \Big| + 2 \prod_{t = 1}^T\frac{ \min(D^1_t, D^2_t)}{D_t^2} - 2 \min\Big(\frac{Z^1}{Z^2}, 1\Big)  \Big) Z^2\Big]\\
    &= \mathbb E\Big[ w(X) \Big( (\lambda - 1) \Big| \prod_{t=1}^{T}\frac{D_t^1}{D_t^2} - 1 \Big| + 2 \prod_{t = 1}^T \min\Big(\frac{D^1_t}{D^2_t}, 1\Big) - 2 \min\Big(\prod_{t=1}^{T}\frac{D_t^1}{D_t^2}, 1\Big)  \Big) Z^2\Big].
\end{split}
\end{equation*}
When $T=1$, it is obvious that $\lambda =1$ is sufficient to guarantee non-negativity because in this case $\prod_{t = 1}^T \min\Big(\frac{D^1_t}{D^2_t}, 1\Big) = \min\Big(\prod_{t=1}^{T}\frac{D_t^1}{D_t^2}, 1\Big)$. 
When $T>2$, we deploy the dynamic programming structure
in order to find the optimal $\lambda$ by induction. For notational simplicity, we set, for $t=1,\dots, T$,
\[
Y_t := \frac{D^1_t}{D^2_t},~
A_{T-1} := \prod_{t = 1}^{T-1} \min( Y_t, 1 ), ~
B_{T-1} := \frac{Z^1_{T-1}}{Z^2_{T-1}} = \prod_{t = 1}^{T-1} Y_t,~ \text{so that } B_{T-1}Y_{T}=\prod_{t=1}^{T}\frac{D_t^1}{D_t^2}.
\]
Then we can write 
\begin{equation}
\label{eq:lambdaTVwATVw}
\begin{split}
&\quad~\lambda\TV_w(\mu,\nu) - \ATV_w(\mu,\nu) \\
    &= \mathbb E\Big[ w(X) \Big( (\lambda - 1) |1-B_{T-1}Y_{T}| + 2 A_{T-1} \min(1,Y_T) - 2 \min(1,B_{T-1}Y_{T})  \Big) D_T^2 Z^2_{T-1}\Big].
\end{split}
\end{equation}
Next,  for $a,b,y \in \R$, $x_{1:T} \in \R^{dT}$, we define the auxiliary functions
\[
J(x_T,y;x_{1:T-1},a,b) \coloneqq w(x_{1:T}) \Big( (\lambda - 1) |1-by| + 2 a \min(1,y) - 2 \min(1,by)  \Big).
\]
By the tower property, we then have 
\begin{equation}
\label{eq:lambdaTVwATVw.1}
\begin{split}
    \lambda\TV_w(\mu,\nu) - \ATV_w(\mu,\nu)
    &=  \mathbb E\Big[ J(X_T,Y_T;X_{1:T-1},A_{T-1},B_{T-1})D_T^2 Z^2_{T-1}\Big]\\
    &= \mathbb E\Big[ \mathbb E\Big[J(X_T,Y_T;X_{1:T-1},A_{T-1},B_{T-1})D_T^2\Big|\calF_{T-1}\Big]Z^2_{T-1}\Big]\\
    &= \mathbb E_{\nu_{1:T-1}}\Big[ \underbrace{\mathbb E_{\nu_{x_{1:T-1}}}\Big[J(X_T,Y_T;X_{1:T-1},A_{T-1},B_{T-1})\Big]}_{(\star)} \Big].
\end{split}
\end{equation}
In the first step of the induction, we focus on minimizing the inner expectation $(\star)$. 
Note that $A_{T-1} \in [0,1]$, $B_{T-1}\geq 0$, $A_{T-1} \leq B_{T-1}$, $\nu_{x_{1:T-1}}$ is a distribution in $\calP(\R^{d})$, and $Y_T$ is a non-negative random variable satisfying $\E_{\nu_{x_{1:T-1}}}[Y_T] = 1$. 
Thus, to find the worst-case lower bound in $(\star)$,
we minimize $\mathbb E_\eta[J(X_T,Y_T;x_{1:T-1},a,b)]$ over all $\eta \in \calP(\R^d)$, $b \ge 0$, $0 \le a \le \min(1,b)$, and $Y_T \ge 0$ with  $\E_\eta[Y_T]  = 1$. 
Therefore, we consider the following minimization problem:
\begin{equation}
    \label{eq:toshow.bound.wmoments}
    \min_{ \eta \in \calP(\R^d), Y_T\geq 0, \E_\eta[Y_T] = 1} \E_{\eta}\Big[w(x_{1:T-1},X_T) \Big( (\lambda - 1) |1-bY_T| + 2 a \min(1,Y_T) - 2 \min(1,bY_T)  \Big)\Big].
\end{equation}
Before diving into solving this problem, we need one further structural assumption on the weighting term $w(x_{1:T-1},X_T)$.
Since we want to find the parameter $\lambda$ such that \eqref{eq:lambdaTVwATVw.1} is non-negative by induction, after solving the first minimization problem \eqref{eq:toshow.bound.wmoments}, we want to get a new weighting term ``$w_{T-1}(x_{1:T-1})$" so that the induction can proceed. 

If $w(x_{1:T}) = 1+\sum_{t=1}^{T}|x_t|^p$, it is clear that we can define $w_{T-1}(x_{1:T-1}) = 1+\sum_{t=1}^{T-1}|x_t|^p$. However, in the generality in which Theorem~\ref{thm:weighted_main_inequality} is stated, $w$ does not have to be of this form.
For this reasons we assume the existence of a sequence of weighting terms $(w_t)_{t=1}^{T}$ in Assumption~\ref{ass:conditional.moments}. 
Given $x_{1:T-1}$, we let $l \coloneqq w_{T-1}(x_{1:T-1})$ and $\Delta W \coloneqq w_T(x_{1:T-1}, X_T) - w_{T-1}(x_{1:T-1})$. 
For $t = T$ in \eqref{eq:ass.cond.moments.1}, we find that $(\nu_{x_{1:T-1}},\Delta W)$ satisfies $\Delta W \geq 0$ and $\E_{\nu_{x_{1:T-1}}}[\Delta W] \leq c_{T}l$. Therefore, to obtain the worst-case bound, similar to the derivation of \eqref{eq:toshow.bound.wmoments} we minimize over all admissible tuples $(\eta,\Delta W, Y_T)$, that is,
\begin{equation}
    \label{eq:toshow.bound.wmoments.general}
    \min_{ \substack{ \eta \in \calP(\R^d), Y_T,\Delta W \geq 0,\\ \E_\eta[Y_T] = 1, \, \E_\eta[\Delta W] \le c_T l}} \E_\eta[ H(l,\Delta W,\lambda,1,a,b,Y_T)],
\end{equation}
where the integrand $H$ is of slightly more general form than in \eqref{eq:toshow.bound.wmoments}.
The function $H\colon \R_+^7 \to \R$ is given by
\begin{equation}
\label{eq:toshow.bound.general_form}
    H(l,u,\lambda,\kappa,a,b,y) = (l + u) \left((\lambda - \kappa)|1-by| + 2\kappa \big( a \min(1,y) - \min(1,by) \big)\right).
\end{equation}
This will eventually allow us to conduct an induction over the time steps, which is essential for the proof of the main result.

\begin{lemma}\label{lem:ATTV}
Let $a,b,c,l,\lambda,\kappa \in \mathbb R^+$ with $0 \le a \le \min(1,b)$ and $\lambda \ge \kappa$.
Then 
\begin{equation}
\label{eq:lem.ATTV}
\inf_{\substack{Y, \,\Delta W \ge 0, \\ \mathbb E[Y] = 1,\, \mathbb E[\Delta W] \le c l} } 
\mathbb E\left[ H(l,\Delta W,\lambda,\kappa,a,b,Y) \right]
=l \cdot 
\begin{cases}
|1 - b|(\lambda - \kappa) + 2\kappa (b + c) ( a - 1), & b \le 1, \\
|1 - b|(\lambda - \kappa) + 2\kappa (1 + c) ( a / b - 1), & b \ge 1.
\end{cases}
\end{equation}
\end{lemma}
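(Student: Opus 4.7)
The plan is to minimize the joint expectation over $(Y,\Delta W)$ by first decoupling the two variables and then solving a scalar moment problem in $Y$ only. Write $h(y) := (\lambda-\kappa)|1-by| + 2\kappa(a\min(1,y) - \min(1,by))$, so that $H(l,u,\lambda,\kappa,a,b,y) = (l+u)h(y)$. A direct piecewise inspection on $[0,\min(1,1/b)]$, $[\min(1,1/b),\max(1,1/b)]$, and $[\max(1,1/b),\infty)$, using only $\lambda\ge\kappa$ and $a\le\min(1,b)$, shows that $h$ is piecewise linear, decreasing on $[0,1/b]$ and increasing on $[1/b,\infty)$, and thus attains its global minimum
\[
m := h(1/b) = 2\kappa\bigl(a\min(1,1/b) - 1\bigr) \le 0
\]
at $y^\star = 1/b$.

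Since $\Delta W \ge 0$ and $h(Y) \ge m$, one has the pointwise inequality $(l+\Delta W)h(Y) \ge l\,h(Y) + m\,\Delta W$. Taking expectations, together with $m\le 0$ and $\mathbb E[\Delta W] \le cl$, yields
\[
\mathbb E\bigl[H(l,\Delta W,\lambda,\kappa,a,b,Y)\bigr] \ \ge\ l\,\mathbb E[h(Y)] + m c l.
\]
It remains to minimize $\mathbb E[h(Y)]$ over $Y\ge 0$ with $\mathbb E[Y]=1$, for which the plan is to exhibit an affine function $L(y) = A + By$ with $L\le h$ pointwise, giving $\mathbb E[h(Y)] \ge L(\mathbb E[Y]) = L(1)$. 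The right choice of $L$ depends on the regime: for $b\le 1$, take $L$ to be the chord between $(0,h(0))$ and $(1/b,h(1/b))$, so that $L\le h$ on $[0,1/b]$ by concavity of $h$ there and on $[1/b,\infty)$ by a direct slope comparison that reduces to $\lambda \ge \kappa a$; for $b\ge 1$, take instead the line through $(1/b,h(1/b))$ with slope $b(\lambda-\kappa)$ (the asymptotic slope of $h$ at infinity), and verify $L\le h$ by an analogous piecewise analysis. Evaluating $L(1)$ gives $(1-b)(\lambda-\kappa)+2\kappa b(a-1)$ in the first case and $(b-1)(\lambda-\kappa)+2\kappa(a/b-1)$ in the second, which when combined with $mcl$ reproduces the right-hand side of \eqref{eq:lem.ATTV}.

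For the matching upper bound, the plan is to exhibit explicit (possibly limiting) admissible pairs concentrating $\Delta W$ on $\{Y=1/b\}$, where $h$ attains its minimum: for $b\le 1$, take $Y\in\{0,1/b\}$ with $\mathbb P(Y=1/b)=b$ and $\Delta W=(cl/b)\mathbbm 1_{\{Y=1/b\}}$; for $b\ge 1$, take $Y\in\{1/b,y_n\}$ with probabilities tuned so that $\mathbb E[Y]=1$ and $y_n\to\infty$, together with $\Delta W$ concentrated on $\{Y=1/b\}$ with total mass $cl$. A direct computation in the first case and passage to the limit in the second match the lower bound. The main technical subtlety is expected to lie in the $b\ge 1$ regime, where the infimum is not attained by any admissible pair and the argument must pass through a minimizing sequence; verifying the pointwise inequality $L\le h$ also requires careful bookkeeping across the three linear pieces of $h$, with the key inputs being $\lambda\ge\kappa$ and $a\le\min(1,b)$.
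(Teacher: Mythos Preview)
Your proof is correct and takes a genuinely different route from the paper's.

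The paper recognizes the infimum as the value of the two-dimensional convex hull $f^{\ast\ast}(cl,1)$ of $f(x,y)=(l+x)h(y)$, computes the partial convex hull $g(x,\cdot)$ in $y$ for each fixed $x$, and then constructs an affine tangent hyperplane in the $(x,y)$-plane through three well-chosen points on $g$ to pin down $f^{\ast\ast}$ at $(cl,1)$. Your argument bypasses the 2D convex analysis entirely via the pointwise decoupling $(l+\Delta W)h(Y)\ge l\,h(Y)+m\,\Delta W$ (valid because $m=\min h\le 0$ and $\Delta W\ge 0$), which immediately separates off the $\Delta W$-contribution as $mcl$ and reduces everything to the one-dimensional moment problem $\inf\{\mathbb E[h(Y)]:Y\ge 0,\ \mathbb E[Y]=1\}$, solved by an affine minorant of $h$ in $y$ alone. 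Your approach is more elementary and makes the additive structure of the answer (the $mcl$ term plus the 1D value) transparent; the paper's convex-hull framing is more systematic and would extend more readily if the integrand did not factor as $(l+u)h(y)$. Both the upper-bound constructions and the regime-dependent affine minorants you describe check out, including the slope comparison $\lambda\ge\kappa a$ you flag in the $b\le 1$ case and the limiting construction for $b\ge 1$.
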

    \begin{proof}
        Consider the function $f$ given by
        \[
            f(x,y) := (l + x) \big( (\lambda - \kappa) |1 - by| + 2 \kappa \big( a \min(1,y) - \min(1,by) \big)\big),
        \]
        for $x,y \ge 0$,
        and note that the left-hand side in \eqref{eq:lem.ATTV} is precisely $f^{\ast\ast}(cl,1)$, that is the value of the convex hull of $f$ at $(cl,1)$.
    \begin{figure}[H]     
    \begin{center}
    \begin{minipage}{0.5\textwidth}
    \small
    \begin{tikzpicture}
    \begin{axis}[
        width=8cm, height=6cm,
        grid=both,
        grid style={line width=.1pt, draw=gray!10},
        major grid style={line width=.2pt,draw=gray!50},
        axis lines=left,
        xlabel={$y$},
        ylabel={},
        xmin=0, xmax=3,
        ymin=-9, ymax=13,
        legend style={draw=none, legend pos=north west},
        title={Case $b \leq 1$}
    ]
    % First plot: f(0, z)
    \addplot[
        blue,
        thick
    ]
    coordinates {
        (0,4) (1,2) (2,-2) (4,2)
    };
    \addlegendentry{$f(0, y)$}

    % Second plot: h(0, z)
    \addplot[
        orange,
        thick
    ]
    coordinates {
        (0,4) (2,-2) (4,-8)
    };
    \addlegendentry{$g(0, y)$}
    \end{axis}
\end{tikzpicture}
\end{minipage}%
\begin{minipage}{0.5\textwidth}
    \begin{tikzpicture}
    \small
    \begin{axis}[
        width=8cm, height=6cm,
        grid=both,
        grid style={line width=.1pt, draw=gray!10},
        major grid style={line width=.2pt,draw=gray!50},
        axis lines=left,
        xlabel={$y$},
        ylabel={},
        xmin=0, xmax=3,
        ymin=-3, ymax=13.5,
        legend style={draw=none, legend pos=north west},
        title={Case $b \geq 1$}
    ]
    % First plot: f(0, z)
    \addplot[
        blue,
        thick
    ]
    coordinates {
        (0,2.5) (0.5,-1) (1,1) (4, 10)
    };
    \addlegendentry{$f(0, y)$}

    % Second plot: h(0, z)
    \addplot[
        orange,
        thick
    ]
    coordinates {
        (0,-2.5) (0.5,-1) (4, 9.5)
    };
    \addlegendentry{$g(0, y)$}
    \end{axis}
\end{tikzpicture}
\end{minipage}
\end{center}
\caption{Visualization of $f(0,\cdot)$ and $g(0,\cdot)$.}
\label{fig:fg}
\end{figure}       
As an auxiliary step, we first fix $x \ge 0$ and compute the convex hull of $f(x,\cdot)$ which we denote by $g(x,\cdot)$.
Depending on the value of $b$ we have
\[
\frac{d}{dz} f(x,y) = (l + x) \cdot 
\begin{cases}
-b(\lambda + \kappa) + 2 \kappa a, & y < \min(1,\frac1b), \\
b (\lambda - \kappa) + 2 \kappa a, & \frac1b < y < 1, \\
-b(\lambda + \kappa), & 1 < y < \frac1b, \\
b (\lambda - \kappa), & \max(1,\frac1b) < y.
\end{cases}
\]
If $b \le 1$ and $\lambda \ge \kappa$, the convex hull is given by
\begin{align*}
g(x,y) &= 
\begin{cases}
(1 - by) f(x,0) + by f(x,\frac1b), & y \le \frac1b \\
f(x,y), & y \ge \frac1b
\end{cases}\\
&= (l + x) \cdot
\begin{cases}
|1 - by| (\lambda - \kappa) + 2 by \kappa (a - 1), & y \le \frac1b \\
|1 - by| (\lambda - \kappa) + 2 \kappa (a - 1), & y \ge \frac1b
\end{cases}.
\end{align*}
Similarly, when $b \ge 1$ and $\lambda \ge \kappa$, we find
\begin{align*}
g(x,y) &=
\begin{cases}
f(x,y), & y \le \frac1b \\
                f(x,\frac1b) + (y - \frac1b) (l + x) b(\lambda - \kappa), & y \ge \frac1b
            \end{cases}\\
            &= (l + x) \cdot
            \begin{cases}
                 |1 - by| (\lambda - \kappa) + 2 \kappa y (a - b) & y \le \frac1b \\
                 |1 - by| (\lambda - \kappa) + 2 \kappa(a/b - 1) & y \ge \frac1b 
            \end{cases}
            .
        \end{align*}
        We observe that, in both cases, $g$ can be written as
        \[
            g(x,y) = (l + x) \Big( |1 - by| (\lambda - \kappa) + 2 \kappa \frac{\min(by,1)}{\max(b,1)} \Big( a - \max(b,1) \Big) \Big),
        \]
and $f^{\ast\ast}$ is also the convex hull of $g$.
To obtain a candidate for $f^{\ast\ast}$, we note that $g(w,\cdot)$ is V-shaped with ridge at $\frac1b$, $g(\cdot, \frac1b)$ is an affine function, and $g(0,\cdot)$ is affine when restricted to $[0,\frac1b]$ or $[\frac1b,\infty)$.
        
        Therefore, for $b \neq 1$, we consider the affine hyperplane $h$ coinciding with $g$ at the points $(0,1/b)$, $(1,1/b)$ and $(0,1)$, that is, 
        \[
            h(x,y) := 
            g(0,1/b) + x \big( g(1,1/b) - g(0,1/b) \big) + \frac{y - 1/b}{1 - 1/b} \big( g(0,1) - g(0,1/b) \big).
        \]
        The values of $g$ at these points are given by
        \begin{align*}
            g(0,1/b) &= 2 l \kappa \Big( \frac{a}{\max(1,b)} - 1 \Big),\\
            g(1,1/b) - g(0,1/b) &= 2 \kappa  \Big( \frac{a}{\max(1,b)} - 1 \Big),\\
            g(0,1) - g(0,1/b) &= l\Big( |1-b| (\lambda - \kappa) + \frac{2 \kappa}{\max(b,1)} \big( a - \max(b,1) \big) \big( \min(b,1) - 1 \big)  \Big).
        \end{align*}
        Substituting these values into the definition of $h$ yields that
        \begin{align*}
            h(x,y) 
            &= 
            \begin{cases}
                2\kappa( l + x) (a - 1) + l (1 - by) ( \lambda -  \kappa - 2\kappa (a - 1) ) & b < 1, \\
                2\kappa( l + x) (a / b - 1) + l (by - 1) (\lambda - \kappa) & b > 1.
            \end{cases}            
        \end{align*}
        In the next step, we show that $g \ge h$ on $\mathbb R^+ \times \mathbb R^+$.
        On the one hand, when $b < 1$ we compute
        \begin{align*}
            h(x,y) &= 2 \kappa (a - 1 ) ( l b y + x ) + l (1 - by) ( \lambda - \kappa ) \\
            &\le 2 k ( a - 1 ) ( l + x ) \min(1,by) + l | 1 - by| (\lambda - \kappa) \le g(x,y),
        \end{align*}
        using that $\lambda \ge \kappa$, $a \le 1$ and $x \ge 0$.
        On the other hand, when $b > 1$ we obtain
        \begin{align*}
            h(x,y) &\le (l + x)  \big( 2\kappa(a/b - 1) + |1 - by|(\lambda - \kappa) \big) \le g(x,y),
        \end{align*}
        using once more that $\lambda \ge \kappa$, $0 \le a \le b$, and $x \ge 0$.

        We have shown that $h$ lies tangentially to $g$ with $h(\cdot,\frac1b) = g(\cdot,\frac1b)$ and satisfying $h(0,\cdot) = g(0,\cdot)$ on $[0,\frac1b]$ when $b < 1$ and on $[\frac1b,\infty]$ when $b > 1$.
        Furthermore, due to the continuous dependence on $b$, we also find the value of $f^{\ast\ast}$ when $b = 1$, from where we deduce that
        \[
            f^{\ast\ast}(xl,1) = h(xl,1) = l \cdot
            \begin{cases}
                2\kappa(b + x)(a - 1) + |1 - b|(\lambda - \kappa) & b \le 1, \\
                2\kappa(1 + x)(a/b - 1) + |1 - b|(\lambda - \kappa) & b \ge 1.
            \end{cases}
        \]
        Finally, the assertion follows then by noting that $f^{\ast\ast}$ is decreasing in its first argument.
    \end{proof}

    \begin{corollary} \label{cor:ATTV}
        In the setting of Lemma \ref{lem:ATTV}, we have
        \begin{equation}
        \begin{split}
            &\inf_{ \substack{Y, \, \Delta W \ge 0, \\ \mathbb E[Y] = 1, \, \mathbb E[\Delta W] \le cl} } \mathbb E\left[ H(l,\Delta W,\lambda,\kappa,a,b,Y) \right]\\ 
            \ge
            &l \big( |1 - b|(\lambda - \kappa - 2 \kappa(c + 1)) + 2 \kappa (c + 1) (a - \min(b,1) \big).
            \end{split}
        \end{equation}
    \end{corollary}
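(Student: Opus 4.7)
The plan is to derive the stated lower bound as a direct algebraic consequence of the exact formula in Lemma~\ref{lem:ATTV}, splitting into the same two cases $b\le 1$ and $b\ge 1$.

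First, I would rewrite the proposed lower bound by distributing over $|1-b|$:
\[
l\big(|1-b|(\lambda-\kappa) - 2\kappa(c+1)|1-b| + 2\kappa(c+1)(a-\min(b,1))\big).
\]
The term $l|1-b|(\lambda-\kappa)$ appears identically in both the exact value from Lemma~\ref{lem:ATTV} and in this expansion, so after cancelling it the claim reduces to two scalar inequalities (one per case) that do not involve $\lambda$. Concretely, I need
\[
2\kappa(b+c)(a-1) \;\ge\; 2\kappa(c+1)(a-b) - 2\kappa(c+1)(1-b) \quad \text{when } b \le 1,
\]
and
\[
2\kappa(1+c)(a/b-1) \;\ge\; 2\kappa(c+1)(a-1) - 2\kappa(c+1)(b-1) \quad \text{when } b \ge 1.
\]

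Second, I would simplify the difference LHS$-$RHS in each case. In the regime $b\le 1$, a direct expansion shows the difference collapses to $2\kappa(a-1)(b-1)$, and this is non-negative because $a\le \min(1,b)\le 1$ and $b\le 1$ make both factors non-positive. In the regime $b\ge 1$, pulling out the common factor $2\kappa(c+1)$ and clearing the denominator $b>0$, the difference equals $2\kappa(c+1)(b-1)(b-a)/b$, which is non-negative since $b\ge 1$ and $a\le \min(1,b)\le b$. In both cases the sign constraints come exactly from the standing hypothesis $0\le a\le \min(1,b)$, while the assumption $\lambda\ge\kappa$ was already consumed at the level of Lemma~\ref{lem:ATTV}.

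There is no real obstacle here, the statement is a clean algebraic weakening of Lemma~\ref{lem:ATTV} whose only purpose is to collect the two piecewise expressions into a single unified formula that is linear in $|1-b|$ and in $a-\min(b,1)$. The only point requiring a bit of care is to ensure that the common term $l|1-b|(\lambda-\kappa)$ is isolated cleanly before comparing the residual quadratic-in-$(a,b)$ terms, so that one is not misled by sign changes across $b=1$; once this bookkeeping is done, the inequality is manifest in each case and continuity at $b=1$ (where both sides agree) provides a sanity check.
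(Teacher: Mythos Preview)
Your proposal is correct and follows essentially the same route as the paper: both derive the bound directly from the exact formula in Lemma~\ref{lem:ATTV} by splitting into the cases $b\le 1$ and $b\ge 1$ and checking an elementary algebraic inequality in each. The paper compresses the verification into the single observation $a-1 \ge a-\min(b,1)-|1-b|$, whereas you compute the differences $2\kappa(a-1)(b-1)$ and $2\kappa(c+1)(b-1)(b-a)/b$ explicitly; the content is the same.
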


    \begin{proof}
        Using the basic inequality
        \[
            a - 1 = a - \min(b,1) + \min(b,1) - 1 \ge
            a - \min(b,1) - | 1 - b|,
        \]
        we directly derive the claim from \eqref{eq:lem.ATTV}.
    \end{proof}

We are finally ready to prove the announced estimate. 
\begin{proof}[Proof of the estimate \eqref{eq:thm.weighted_main_inequality} in Theorem~\ref{thm:weighted_main_inequality}]
We start by considering the case where $\mu \ll \nu$. For $t = 1,\ldots,T$, we use the notation
\[
    Y_t \coloneqq  \frac{D^1_t}{D^2_t},~ A_t := \prod_{s = 1}^t \min \big(  Y_s,1 \big), ~
    B_t \coloneqq \frac{Z^1_t}{Z^2_t} = \prod_{s = 1}^t\frac{D^1_s}{D^2_s} =  \prod_{s = 1}^t Y_s,
\]
$W_t = w_{1:t}(X_{1:t}), ~ \Delta W_{t+1} = W_{t+1} - W_{t}$. Moreover, we recursively define the constants
\begin{align*}
\kappa_T &:= 1, \quad
\kappa_t := (c_{t+1} + 1)\kappa_{t+1}\quad \text{for $t=T-1, \ldots,1$,\quad and} \\
\lambda_1 &:= \kappa_1, \quad
\lambda_{t+1} := \lambda_t + \kappa_{t+1} + 2\kappa_{t+1}(c_{t+1}+1) - \kappa_{t}\quad \text{for $t=1,\ldots,T-1$},
\end{align*}
which in particular gives $\lambda_T = 1 + 2\sum_{t=1}^{T-1}\prod_{s=t+1}^{T}(1+c_s)$. Also, recall that
\begin{equation*}
    H(l,u,\lambda,\kappa,a,b,y) = (l + u) \left((\lambda - \kappa)|1-by| + 2\kappa \big( a \min(1,y) - \min(1,by) \big)\right).
\end{equation*}
Then, by applying Corollary~\ref{cor:ATTV} with
\[
l = W_{t-1},\quad u = \Delta W_T,\quad \lambda = \lambda_t,\quad \kappa = \kappa_t,\quad  a =A_{t-1},\quad b = B_{t-1},\quad y = Y_t,\quad c = c_t,
\]
for all $t = 1,\ldots,T-1$, we have
\begin{align}
\label{eq:thm:weighted_main_inequality.1}
\begin{split}
    &\quad~\E_\nu\Big[H(W_{t},\Delta W_{t+1},\lambda_{t+1},\kappa_{t+1},A_{t},B_{t},Y_{t+1})\Big|\calF_{t}\Big]\\
    &\geq W_{t} \big( (\lambda_{t} - \kappa_{t})|1 - B_{t}| + 2 \kappa_{t} (A_{t} - \min(B_{t},1) \big)\\
    &= (W_{t-1} + \Delta W_t) \big( (\lambda_{t} - \kappa_{t})|1 - B_{t-1}Y_{t}| + 2 \kappa_{t} (A_{t-1}\min(Y_t,1) - \min(B_{t-1}Y_{t},1) \big)
    \\
    &= H(W_{t-1},\Delta W_{t},\lambda_{t},\kappa_{t},A_{t-1},B_{t-1},Y_{t}).
    \end{split}
\end{align}
We combine this estimate with the expression of $\lambda \TV_w(\mu,\nu) - \ATV_w(\mu,\nu)$ obtained in \eqref{eq:lambdaTVwATVw}. By inductively applying \eqref{eq:thm:weighted_main_inequality.1} and the tower property of conditional expectation, it follows that
\begin{equation}
\label{eq:thm:weighted_main_inequality.2}
\begin{split}
    \lambda_T\TV_w(\mu,\nu) - \ATV_w(\mu,\nu)
    &=  \mathbb E_\nu\Big[ H(W_{T-1},\Delta W_{T},\lambda_{T},\kappa_{T},A_{T-1},B_{T-1},Y_{T})\Big]\\
    &\geq \cdots \\
    &\geq \mathbb E_\nu\Big[ W_{1} \big( (\lambda_{1} - \kappa_{1})|1 - B_{1}| + 2 \kappa_{1} (A_{1} - \min(B_{1},1) \big)\Big] = 0,
\end{split}
\end{equation}
where the last equality follows from $A_{1} = \min(B_{1},1)$ and $\kappa_1 = \lambda_1$.
        
Next, we prove the result in the general case, by approximating $\nu$ with $\nu^\epsilon$ such that $\mu \ll \nu^\epsilon$. We remain in the probabilistic setting introduced at the beginning of Section~\ref{sec:ATV_TV}, and introduce the stopping time $\tau := \inf \{ t \in \{1,\dots, T\} : D^2_t = 0 \}$, with the convention that the infimum over the empty set is defined as $T$.
        Fix $\epsilon \in (0,1)$ and define 
        \[
            D^{2,\epsilon}_t := \mathbbm 1_{\{ t < \tau \} } R_t^{2,\epsilon} D^2_t + \mathbbm 1_{\{ t = \tau \} } R_t^{1,\epsilon} D^1_t + \mathbbm 1_{\{t > \tau \} } D^1_t, \quad t =1,\ldots, T,
        \]
        where
        \[
            R_t^{1,\epsilon} \coloneqq \frac{\epsilon^2 \min(W_{t-1},1)}{\max(\mathbb E[W_t D^1_t \mathbbm 1_{\{D^2_t =0 \} } | \mathcal F_{t-1}], \epsilon)}, 
            R_t^{2,\epsilon} \coloneqq 1 - R_t^{1,\epsilon} \mathbb E[D^1_t\mathbbm 1_{\{D^2_t = 0\} } | \mathcal F_{t-1}],  t =1,\ldots, T.
        \]
        By this construction, we have $0 < R^{1,\epsilon}_t \le \epsilon$ and $1 \ge R^{2,\epsilon}_t \ge 1 - \epsilon^2$, so that $D_t^{2,\epsilon} \ge 0$.
        Moreover, $(D_t^{2,\epsilon})_{t = 1}^T$ is a conditional density process, that is, $D_t^{2,\epsilon}$ is $\mathcal F_t$-measurable and satisfies
        \begin{align*}
            \mathbb E\big[ D_t^{2,\epsilon} | \mathcal F_{t-1} \big] &=
            \mathbb E\big[ D^1_t |\mathcal F_{t-1} \big] \mathbbm 1_{ \{ t > \tau\} } +
            \mathbb E\big[ \mathbbm 1_{ \{D^2_t = 0 \} } R_t^{1,\epsilon} D^1_t + R_t^{2,\epsilon} D^2_t | \mathcal F_{t-1} \big] \mathbbm 1_{ \{ t \le \tau \} }
            \\
            &= \mathbbm 1_{ \{ t > \tau \} } + \mathbbm 1_{ \{ t \le \tau \} } \big( R_t^{1,\epsilon} \mathbb E[D^1_t \mathbbm 1_{\{ D^2_t = 0\}} | \mathcal F_{t-1}] + R_t^{2,\epsilon} \big) = 1.
        \end{align*}
       Now, we let $\nu^\epsilon$ be the law of the canonical process $X$ under $Z^{2,\epsilon}\sP$, with $Z^{2,\epsilon} = Z^{2,\epsilon}_T$, $Z^{2,\epsilon}_t = \prod_{s=1}^t D_s^{2,\epsilon}$, $t\leq T$, and set $w^\tau := w_{1:\tau}$.
        We claim that the pair $(\nu^\epsilon, w^\tau)$ satisfies Assumption~\ref{ass:conditional.moments}, with constants $(c_t^\epsilon)_{t = 1}^T$ given by $c_t^\epsilon = c_t + \epsilon^2$, and weighting sequence $(w_t^\tau)_{t = 1}^T$ given by $w_t^\tau(x_{1:t}) := w_{ t \wedge \tau }(x_{1:t\wedge \tau})$ for all $t\leq T$.\footnote{Note that as we are working here in the canonical setting and $\tau$ is a stopping time, $t \wedge \tau$ can be seen as a function of $x_{1:t}$.}
        To see this, we compute (again in the equivalent probabilistic setting where $W_t^\tau := w_t^\tau(X_1,\ldots,X_{t \wedge \tau})$)
        \begin{align*}
            &\quad~\mathbb E[D_t^{2,\epsilon}( W^\tau_t - W^\tau_{t-1} ) | \mathcal F_{t-1} ]\\
            &=
            \begin{cases}
                R_t^{1,\epsilon} \mathbb E[\mathbbm 1_{\{ D^2_t = 0 \} } D^1_t (W_t - W_{t-1}) | \mathcal F_{t-1}] + R_t^{2,\epsilon} \mathbb E[D^2_t(W_{t} - W_{t-1}) |\mathcal F_{t-1}], & t \le \tau, \\
                0,  & t > \tau.
            \end{cases}
        \end{align*}
        On $\{ \tau \ge t \}$ we have the bound
        \begin{align*}
            &\quad~R_t^{1,\epsilon} \mathbb E[\mathbbm 1_{\{ D^2_t = 0 \} } D^1_t (W_t - W_{t-1}) | \mathcal F_{t-1}] + R_t^{2,\epsilon} \mathbb E[D^2_t(W_{t} - W_{t-1}) |\mathcal F_{t-1}] \\
            &\le
            R_t^{1,\epsilon} \mathbb E[\mathbbm 1_{\{ D^2_t = 0 \} } D^1_t W_t|\mathcal F_{t-1}] + R_t^{2,\epsilon} c_t W_{t-1}
            \\
            &\le (\epsilon^2 + c_t) W_{t-1} = c_t^\epsilon W_{t-1},
        \end{align*}
        which proves the claim. 
        Since $\mu \ll \nu^\epsilon$, we can apply \eqref{eq:thm:weighted_main_inequality.2} with $\lambda^\epsilon := 1 + 2\sum_{t = 1}^{T-1} \prod_{s = t + 1}^T (1 + c_s^\epsilon)$, and get
        \begin{align*}
            \lambda^\epsilon \TV_{w^\tau}(\mu,\nu^\epsilon) - \ATV_{w^\tau}(\mu,\nu^\epsilon) \ge 0.
        \end{align*}
         Note that, if $T > \tau$, $\prod_{t = 1}^T \min(D^1_t,D^2_t) =  \min( Z^1,Z^2 ) = 0$. Thus 
         \begin{align*}
            &\quad~\lambda \TV_w(\mu,\nu) - \ATV_w(\mu,\nu)\\ 
            &=
            \mathbb E\Big[ W_T \Big( (\lambda - 1)|Z^1 - Z^2| + 2 \Big( \prod_{t = 1}^T \min(D^1_t,D^2_t) - \min( Z^1,Z^2 ) \Big) \Big) \Big] \\
            &=
            \mathbb E\Big[ (W_T - W^\tau_T) (\lambda - 1) |Z^1 - Z^2| + W^\tau_T \Big( (\lambda - 1)|Z^1 - Z^2|\\
            &\quad \quad + 2 \Big( \prod_{t = 1}^T \min(D^1_t,D^2_t) - \min( Z^1,Z^2 ) \Big) \Big] 
            \\
            &\ge \mathbb E\Big[ W^\tau_T \Big( (\lambda - 1)|Z^1 - Z^2| + 2 \Big( \prod_{t = 1}^T \min(D^1_t,D^2_t) - \min( Z^1,Z^2 ) \Big) \Big]  \\
            &= \lambda \TV_{w^\tau}(\mu,\nu) - \ATV_{w^\tau}(\mu,\nu).
        \end{align*}
        Since $\TV_{w^\tau}(\mu,\nu) = \lim_{\epsilon \to 0} \TV_{w^\tau}(\mu,\nu^\epsilon)$, $\ATV_{w^\tau}(\mu,\nu) = \lim_{\epsilon \to 0} \ATV_{w^\tau}(\mu,\nu^\epsilon)$, and $\lambda = \lim_{\epsilon \to 0} \lambda^\epsilon$, we conclude the proof of the estimate in \eqref{eq:thm.weighted_main_inequality}. Sharpness is shown in Subsection~\ref{subsec:sharpTVATV}.
    \end{proof}
    
We conclude this subsection by proving Theorem~\ref{thm:AWpTVp}, that follows from the estimates in Theorem~\ref{thm:weighted_main_inequality}.

\begin{proof}[Proof of Theorem~\ref{thm:AWpTVp}] The second inequalities are simply \eqref{eq:thm.ATVTV} and \eqref{eq:thm.ATVpTVp} in Theorem~\ref{thm:weighted_main_inequality}, so we only need to prove the first ones. By the definition of $\AW_p$ and $\ATV_p$, we have
\begin{equation}
\begin{split}
    \AW_p^{p}(\mu,\nu) & = \inf_{\pi \in \bccpl(\mu,\nu)}\int \mathbbm{1}_{\{x \neq y\}}(x,y) |x-y|_p^p \pi(dx,dy)\\
    &\leq \inf_{\pi \in \bccpl(\mu,\nu)}\int \mathbbm{1}_{\{x \neq y\}}(x,y) 2^p\big(|x|_p^p+|y|_p^p\big) \pi(dx,dy)\\
    &\leq 2^p \ATV_p^p(\mu,\nu).
\end{split}
\end{equation}
Now assume $\calK\subset \calX$ compact and $\mu,\nu \in \calP(\calK)$. Then 
\begin{equation}
\begin{split}
    \AW_p^{p}(\mu,\nu) \leq \inf_{\pi \in \bccpl(\mu,\nu)}\int \mathbbm{1}_{\{x \neq y\}}(x,y) \max_{x,y \in \calK}|x-y|_p^p \pi(dx,dy) \leq \mathrm{diam}(\calK)^p\ATV(\mu,\nu),
\end{split}
\end{equation}
which concludes the proof.
\end{proof}

\subsection{Sharpness of the constant in Theorem~\ref{thm:weighted_main_inequality}}
\label{subsec:sharpTVATV}
In this section, we present examples that illustrate both the sharpness of the bound established in Theorem~\ref{thm:weighted_main_inequality}, and the necessity of its dependence on the coefficients $c_t$.
\forestset{
  declare toks={elo}{font=\footnotesize, inner sep=2pt, midway, sloped}, 
  anchors/.style={anchor=#1,child anchor=#1,parent anchor=#1},
  dot/.style={tikz+={\fill (.child anchor) circle[radius=3pt];}},
  decision edge label/.style 2 args={
    edge label/.expanded={node[anchor=#1,\forestoption{elo}]{$#2$}}
  },
  decision/.style={if n=1
    {decision edge label={south}{#1}}
    {decision edge label={north}{#1}}
  },
  decision tree/.style={
    for tree={
      grow'=east,
      s sep=1em,
      l=13ex,
      if n children=0{anchors=west}{
        if n=1{anchors=south east}{anchors=north east}
      },
    },
    anchors=east,
    dot, for descendants=dot,
    delay={for descendants={split option={content}{;}{content,decision}}},
  },
}

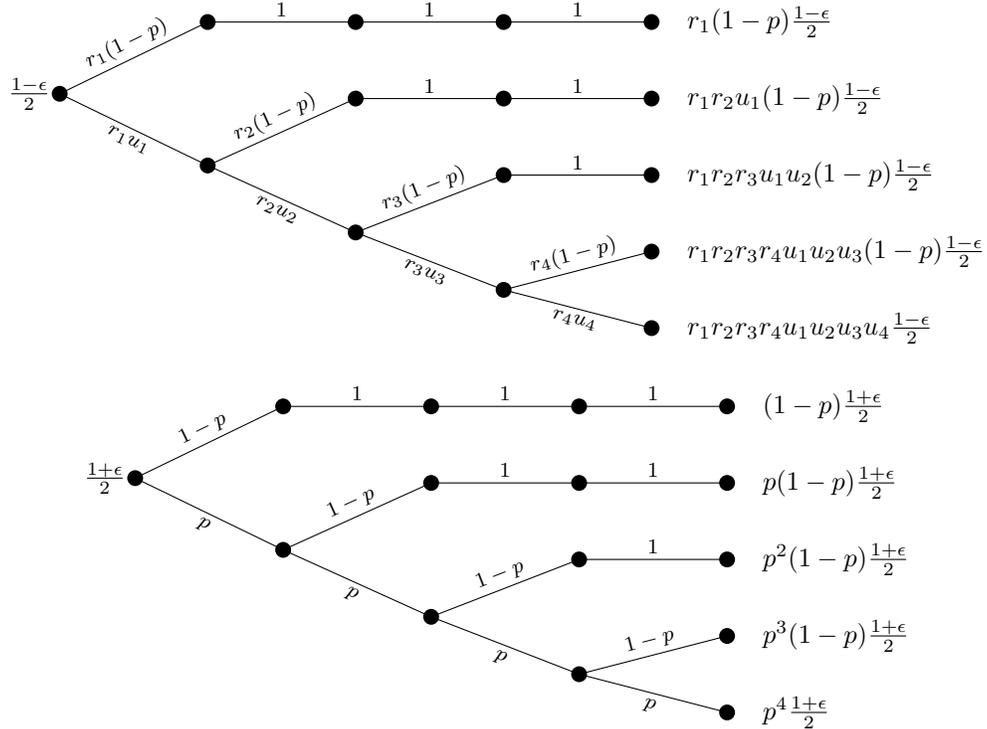
\begin{figure}
\centering
\begin{forest} decision tree
[$\frac{1 - \epsilon}{2}$
  [;r_1(1-p)[;1[;1[$\quad r_1(1-p)\frac{1 - \epsilon}{2}$ ;1]]]]
  [;r_1 u_1
    [;r_2(1-p)[;1[$\quad r_1 r_2 u_1(1-p)\frac{1 - \epsilon}{2}$ ;1]]]
    [;r_2 u_2
      [;r_3(1-p)[$\quad r_1 r_2 r_3 u_1 u_2 (1-p)\frac{1 - \epsilon}{2}$ ;1]]
      [;r_3 u_3 
        [$\quad r_1 r_2 r_3 r_4 u_1 u_2 u_3 (1-p)\frac{1 - \epsilon}{2}$ ;r_4(1-p)]
        [$\quad r_1 r_2 r_3 r_4 u_1 u_2 u_3 u_4 \frac{1 - \epsilon}{2}$ ;r_4 u_4]
      ]
    ]
  ]
]
\end{forest}

\vspace{1em}

\begin{forest} decision tree
[$\frac{1 + \epsilon}{2}$
  [;1-p[;1[;1[$\quad (1-p)\frac{1 + \epsilon}{2}$;1]]]]
  [;p
    [;1-p[;1[$\quad p(1-p)\frac{1 + \epsilon}{2}$;1]]]
    [;p
      [;1-p[$\quad p^2(1-p)\frac{1 + \epsilon}{2}$;1]]
      [;p 
        [$\quad p^3(1-p)\frac{1 + \epsilon}{2}$;1-p]
        [$\quad p^4\frac{1 + \epsilon}{2}$;p]
      ]
    ]
  ]
]
\end{forest}

\caption{Tree representation of the distributions $\gamma_1$ (top) and $\gamma_2$ (bottom) in Example~\ref{ex:boundissharp}, for $T=5$. 
Edge labels indicate transition probabilities between nodes.}
\label{fig:tree}
\end{figure}

\begin{example}[Sharpness of Constants in Theorem~\ref{thm:weighted_main_inequality}]
\label{ex:boundissharp}    
We construct two finite signed Borel measures $\gamma_1, \gamma_2$ on $\R^{T}$ supported on the tree structures illustrated in Figure~\ref{fig:tree}. Let $x^i \in \R^{T}$ denote the path corresponding to the $i$-th leaf from top to bottom, $i=1,\dots,T$. Although the specific values of $x^i_t$ are irrelevant for computing $\ATV$, $\TV$, $\ATV_w$, or $\TV_w$, we define them for clarity as
$x^i = \sum_{j=i+1}^T e^j$,
where $e^j$ is the $j$-th standard basis vector of $\R^T$; for instance,  $x^1 = (0,1,1,\dots,1)$, $x^2 = (0,0,1,\dots,1)$, and so on.
Thus, both $\gamma_1$ and $\gamma_2$ are supported on $\{x^i : i = 1,\dots, T\}$, and we construct them in a Markovian manner via transition kernels.
We want them to satisfy:
\begin{enumerate}[(i)]
    \item $\gamma_1(\{x^i\}) = \gamma_2(\{x^i\})$ for all $i \le T-1$,
    \item $(\gamma_1)_{x_{1:t}}(d x_{t+1}) \ne (\gamma_2)_{x_{1:t}}(d x_{t+1})$ for $x_{1:t} = 0 \in \R^t$ and $t = 1,\dots,T-1$.
\end{enumerate}
This setup is only possible if $\gamma_1(\R^T) \ne \gamma_2(\R^T)$. Hence, fix $\epsilon > 0$ and define their total masses as $\gamma_1(\R^T) = \frac{1 - \epsilon}{2}$ and $\gamma_2(\R^T) = \frac{1 + \epsilon}{2}$.
Let $p = \frac{2\epsilon}{1 + \epsilon}$, a choice we will justify shortly. The transition kernels of $\gamma_2$ are given by
\[
\begin{cases}
\gamma_2(X_{t+1} = 1\,|\,X_t = 0) = 1 - p,\\
\gamma_2(X_{t+1} = 0\,|\,X_t = 0) = p,\\
\gamma_2(X_{t+1} = 1\,|\,X_t = 1) = 1.
\end{cases}
\]

If $\gamma_1$ were to have the same transition kernel, we could not achieve $\gamma_1(\{x^i\}) = \gamma_2(\{x^i\})$ for all $i \le T-1$, due to the difference in total mass. To compensate, we scale the probability of transitioning upward at each node by a factor $r_t$, to be determined. That is, we define for $\gamma_1$
\[
\begin{cases}
\gamma_1(X_{t+1} = 1\,|\,X_t = 0) = r_t(1 - p),\\
\gamma_1(X_{t+1} = 0\,|\,X_t = 0) = 1 - r_t(1 - p),\\
\gamma_1(X_{t+1} = 1\,|\,X_t = 1) = 1.
\end{cases}
\]
We require $\gamma_1(\{x^i\}) = \gamma_2(\{x^i\})$ for all $i \le T-1$. To simplify notation, define $u_t \ge 0$ such that
\[
r_t u_t = 1 - r_t(1 - p), \quad \text{for all } t \le T - 1.
\]
Then the measures are given by
\begin{align*}
\gamma_2(\{x^i\}) &= \frac{1 + \epsilon}{2} (1 - p)p^{i-1}, \\
\gamma_1(\{x^i\}) &= \frac{1 - \epsilon}{2} (1 - p) r_1 \prod_{s=2}^i r_s u_{s-1}.
\end{align*}
Matching these two expressions yields
\[
r_t = \frac{1}{1 - d_t}\;\; \text{ and }\;\;  u_t = p - d_t, \quad \text{where} \quad d_t = \frac{2\epsilon}{(1 + \epsilon)p^{t-1}}, \quad t \le T - 1.
\]
To ensure validity, i.e., that the transition kernel of $\gamma_1$ is a probability kernel, we check the constraint $0 \le r_t(1-p) \le 1$, which reduces to
\[
d_t < 1 \quad \text{and} \quad p^t \ge \frac{2\epsilon}{1 + \epsilon}.
\]
This is satisfied when $p = \frac{2\epsilon}{1 + \epsilon}$ and $\epsilon < 1/2$, confirming our choice. In summary, we have constructed $\gamma_1$ and $\gamma_2$ such that
\[
\gamma_1(\{x^i\}) = \gamma_2(\{x^i\}) \quad \text{for all } i \le T-1, \quad \text{while} \quad (\gamma_1)_{x_t} \ne (\gamma_2)_{x_t} \text{ for all } t \le T-1.
\]

We now use $\gamma_1$ and $\gamma_2$ to define probability measures $\mu_\epsilon, \nu_\epsilon \in \calP(\R^T)$. For any $j \in \{-1, 1\}$ and $x \in \R^T$, define the shift $x^j := x + j \sum_{t=1}^T e^t$. Then define the shifted measures $\gamma_k^j := (x \mapsto x^j)_{\#} \gamma_k$ for $k = 1,2$.
Finally, set
\[
\mu_\epsilon := \gamma_1^1 + \gamma_2^{-1}, \qquad \nu_\epsilon := \gamma_2^1 + \gamma_1^{-1}.
\]
In other words, $\nu_\epsilon$ is obtained by swapping the initial positions of the two components in $\mu_\epsilon$.
For simplicity, we denote $\mu = \mu_\epsilon$ and $\nu = \nu_\epsilon$ in the following, and write $x^{i,j} := (x^i)^j$ for all $i \le T$ and $j \in \{-1, 1\}$.
\medskip

\noindent\textbf{The bound in \eqref{eq:thm.ATVTV} is sharp.}
We first compute the total variation
\begin{align*}
    \TV(\mu, \nu) &= \sum_{j\in \{1,-1\}}\sum_{i=1}^{T} |\mu(\{x^{i,j}\}) - \nu(\{x^{i,j}\})| \\
    &= 2 \sum_{i=1}^{T} |\gamma_1(\{x^i\}) - \gamma_2(\{x^i\})| 
    = 2 |\gamma_1(\{x^T\}) - \gamma_2(\{x^T\})| = 2\epsilon,
\end{align*}
where the second equality follows by symmetry, and the third one from the fact that $\gamma_1(\{x^i\}) = \gamma_2(\{x^i\})$ for all $i \leq T - 1$.

Next, we compute the adapted total variation. By \eqref{eq:lem.ATV.charlight.0}, we find
\begin{equation*}
    \begin{split}
        \ATV(\mu, \nu) 
        &= \sum_{j\in \{1,-1\}} \sum_{i=1}^{T} \bigg(\mu(\{x^{i,j}\}) + \nu(\{x^{i,j}\}) 
        - 2\Big( 
        \mu_1 \wedge \nu_1 \prod_{t = 1}^{T-1} \mu_{x_{1:t}} \wedge \nu_{y_{1:t}}
        \Big)(\{x^{i,j}\})\bigg) \\
        &= 2 \sum_{i=1}^{T} \bigg(\gamma_1(\{x^i\}) + \gamma_2(\{x^i\}) 
        - 2\Big( 
        (\gamma_1)_1 \wedge (\gamma_2)_1 \prod_{t = 1}^{T-1} 
        (\gamma_1)_{x_{1:t}} \wedge (\gamma_2)_{y_{1:t}}
        \Big)(\{x^i\})\bigg).
    \end{split}
\end{equation*}
As we have, for all $t \leq T - 1$, that $r_t(1 - p) > (1 - p)$ and $u_t r_t < p$, the following expression holds:
\begin{equation*}
    \Big( 
    (\gamma_1)_1 \wedge (\gamma_2)_1 \prod_{t = 1}^{T-1} 
    (\gamma_1)_{x_{1:t}} \wedge (\gamma_2)_{y_{1:t}}
    \Big)(\{x^i\}) = 
    \begin{cases}
        \frac{1 - \epsilon}{2} \prod_{t=1}^{i-1} u_t r_t (1 - p), & i \leq T - 1, \\
        \frac{1 - \epsilon}{2} \prod_{t=1}^{T - 1} u_t r_t, & i = T.
    \end{cases}
\end{equation*}
Recall that
\begin{equation*}
    \gamma_1(\{x^i\}) = 
    \begin{cases}
        \frac{1 - \epsilon}{2} (1 - p) r_1 \prod_{t=2}^{i} r_t u_{t-1} 
        = \gamma_2(\{x^i\}), & i \leq T - 1, \\
        \frac{1 - \epsilon}{2} \prod_{t=1}^{T - 1} r_t u_t 
        = \gamma_2(\{x^i\}) - \epsilon, & i = T,
    \end{cases}
\end{equation*}
and 
\begin{equation*}
    \gamma_2(\{x^i\}) =
    \begin{cases}
        \frac{1 + \epsilon}{2} (1 - p) p^{i - 1}, & i \leq T - 1, \\
        \frac{1 + \epsilon}{2} p^{T - 1}, & i = T.
    \end{cases}
\end{equation*}
Plugging this in, we obtain
\begin{equation*}
    \begin{split}
        \ATV(\mu, \nu) 
        &= 2 \sum_{i=1}^{T - 1} \left( \gamma_1(\{x^i\}) + \gamma_2(\{x^i\}) 
        - (1 - \epsilon) \prod_{t=1}^{i-1} u_t r_t (1 - p) \right)\\
        &~\quad 
        + 2 \left( \gamma_1(\{x^T\}) + \gamma_2(\{x^T\}) 
        - (1 - \epsilon) \prod_{t=1}^{T - 1} u_t r_t \right) \\
        &= 2 \sum_{i=1}^{T - 1} \left( 2\gamma_2(\{x^i\}) - \frac{2\gamma_2(\{x^i\})}{r_i} \right) 
        + 2 \left( 2\gamma_2(\{x^T\}) - \epsilon - 2(\gamma_2(\{x^T\}) - \epsilon) \right) \\
        &= 2 \sum_{i=1}^{T - 1} 2\gamma_2(\{x^i\}) d_i + 2\epsilon\\
        &= 2 \sum_{i=1}^{T - 1} 2 \cdot \frac{1 + \epsilon}{2}(1 - p)p^{i - 1} 
        \cdot \frac{2\epsilon}{(1 + \epsilon)p^{i - 1}} + 2\epsilon\\
        &= (2T - 2)(1 - p) \cdot 2\epsilon + 2\epsilon.
    \end{split}
\end{equation*}
Therefore, we have
\[
\lim_{\epsilon \to 0} \frac{\ATV(\mu_\epsilon, \nu_\epsilon)}{\TV(\mu_\epsilon, \nu_\epsilon)} 
= \lim_{\epsilon \to 0} \left( (2T - 2)\Big(1 - \frac{2\epsilon}{1+\epsilon}\Big) + 1 \right) 
= 2T - 1,
\]
which proves that the bound in \eqref{eq:thm.ATVTV} is sharp.
\medskip

\noindent\textbf{The bound in \eqref{eq:thm.weighted_main_inequality} is sharp.}
Let $(c_t)_{t = 2}^T \in \mathbb{R}_{\geq 0}^{T-1}$. Define weights $w: \mathbb{R}^T \to [0, \infty)$ and $w_t: \mathbb{R}^t \to [0, \infty)$ for $t \leq T$, by setting $w_T = w$ and, for all  $i \leq T,\ j \in \{-1, 1\}$,
\[
w_t(x^{i,j}_{1:t}) = 
\begin{cases}
    \prod_{s=i+1}^t (1 + c_s), & t > i\\
    1, & t \leq i
\end{cases}.
\]
By construction, $w_t(x^{i,j}_{1:t}) \leq w_{t+1}(x^{i,j}_{1:t+1})$ for all $t \leq T - 1$. Moreover,
\begin{equation*}
    \int w_{t+1}(x^{i,j}_{1:t}, \tilde{x}_{t+1}) \, \nu_{x^{i,j}_{1:t}}(d\tilde{x}_{t+1}) 
    \leq (1 + c_{t+1}) w_t(x^{i,j}_{1:t}).
\end{equation*}
Therefore, $(\nu_\epsilon, w)$ satisfies the conditional weighting Assumption~\ref{ass:conditional.moments} with $(c_t)_{t = 2}^T$.

We now compute the weighted total variation:
\begin{equation*}
    \TV_w(\mu, \nu) 
    = \sum_{j\in \{1,-1\}}\sum_{i=1}^{T} w(x^{i,j})|\mu(\{x^{i,j}\}) - \nu(\{x^{i,j}\})| 
    = 2 w(x^T) |\gamma_1(\{x^T\}) - \gamma_2(\{x^T\})| 
    = 2\epsilon.
\end{equation*}

The weighted adapted total variation is then:
\begin{equation*}
    \begin{split}
        \ATV_w(\mu, \nu) 
        &= 2 \sum_{i=1}^{T - 1} w(x^i) \left( \gamma_1(\{x^i\}) + \gamma_2(\{x^i\}) 
        - (1 - \epsilon) \prod_{t=1}^{i - 1} u_t r_t (1 - p) \right) \\
        &\quad + 2 w(x^T) \left( \gamma_1(\{x^T\}) + \gamma_2(\{x^T\}) 
        - (1 - \epsilon) \prod_{t=1}^{T - 1} u_t r_t \right) \\
        &= 2 \sum_{i=1}^{T - 1} 2\gamma_2(\{x^i\}) d_i w(x^i) + 2\epsilon \\
        &= 2\epsilon \sum_{i=1}^{T - 1} 2(1 - p) w(x^i) + 2\epsilon 
        = 2\epsilon(1 - p) \sum_{i=1}^{T - 1} 2\prod_{s = i + 1}^T (1 + c_s) + 2\epsilon.
    \end{split}
\end{equation*}

Therefore,
\[
\lim_{\epsilon \to 0} \frac{\ATV_w(\mu_\epsilon, \nu_\epsilon)}{\TV_w(\mu_\epsilon, \nu_\epsilon)} 
= \lim_{\epsilon \to 0} \left( 2(1 - p) \sum_{i=1}^{T - 1} \prod_{s = i + 1}^T (1 + c_s) + 1 \right)
= 2 \sum_{t=1}^{T - 1} \prod_{s = t + 1}^T (1 + c_s) + 1,
\]
establishing that the bound in \eqref{eq:thm.weighted_main_inequality} is sharp.
\end{example}

\begin{example}[The dependence on $c_t$ is necessary]
\label{ex:ct}
    For all $\epsilon \in (0,1)$, let $\mu^{\epsilon} = \epsilon(1-\epsilon)\delta_{(1,\frac{1}{\epsilon})} + \epsilon^{2}\delta_{(1,0)} + (1-\epsilon)\delta_{(0,0)}$ and $\nu^{\epsilon} = \epsilon(1-\epsilon)\delta_{(1,\frac{1}{\epsilon})} + (1-\epsilon + \epsilon^2)\delta_{(0,0)}$; see Figure~\ref{fig:ex_mu_nu_epsilon} for visualization.
     \begin{figure}
    \centering
    \begin{subfigure}{.4\textwidth}
      \centering
      \includegraphics[width=0.8\linewidth]{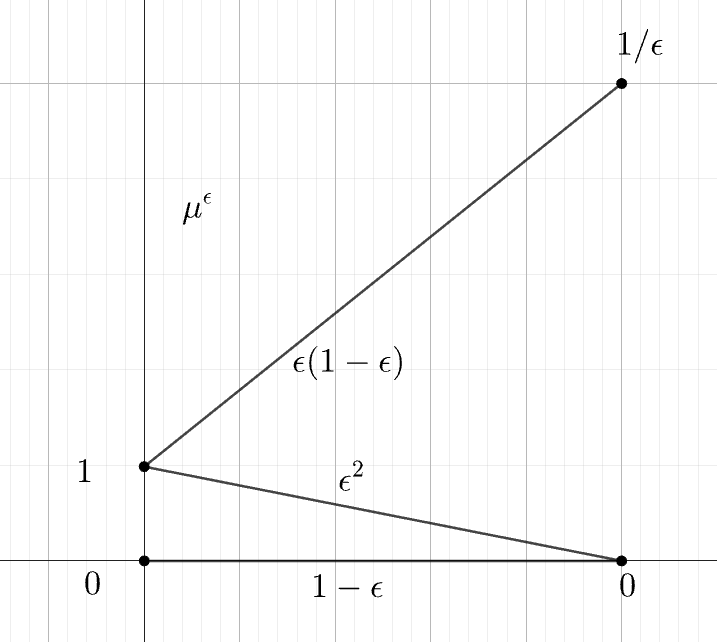}
      \caption{$\mu^\epsilon$}
    \end{subfigure}
    \begin{subfigure}{.4\textwidth}
      \centering
      \includegraphics[width=0.8\linewidth]{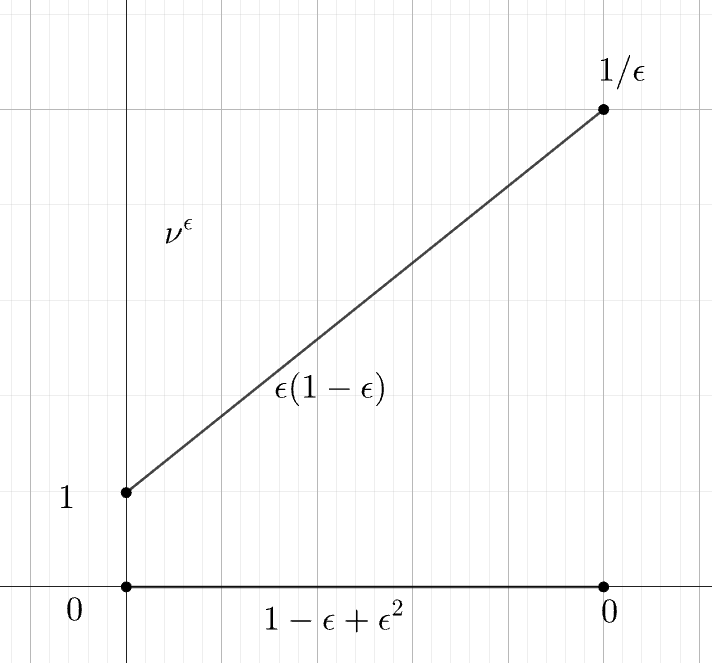}
      \caption{$\nu^\epsilon$}
    \end{subfigure}
    \caption{Visualization of $\mu^\epsilon$ and $\nu^\epsilon$ in Example~\ref{ex:ct}.}
    \label{fig:ex_mu_nu_epsilon}
\end{figure}
    Note that, for all $\epsilon \in (0,1)$, $\int \Vert x \Vert \mu^{\epsilon}(dx) \leq 2$ and $\int \Vert x \Vert \nu^{\epsilon}(dx) \leq 2$. Then, an easy computation gives $\TV_1(\mu^\epsilon, \nu^\epsilon) = 2\epsilon^2$ and $\ATV_1(\mu^\epsilon, \nu^\epsilon) = 2 + \epsilon - \epsilon^2$. Thus we have
    \begin{equation*}
        \lim_{\epsilon \to 0}\frac{\ATV_1(\mu^\epsilon, \nu^\epsilon)}{\TV_1(\mu^\epsilon, \nu^\epsilon)} = \lim_{\epsilon \to 0} \frac{2 + \epsilon - \epsilon^2}{2\epsilon^2} = +\infty,
    \end{equation*}
    which implies that there exists no constant $C >0$ independent of $(c_t)_{t=2}^{T}$ defined in \eqref{eq:thm.ATVVp.constraint} such that $\ATV_{1} \leq C\TV_{1}$, even for integrable measures. While this example is given for $p=1$, it easily generalizes to any $p\geq 1$ by $x\mapsto x^{\frac{1}{p}}$ transformation of the state space. 
\end{example}

\section{Linking $\TV$-distance and $\AW$-distance with $\W$-distance}
\label{sec:AW_W}
This section is devoted to the proof of Theorem~\ref{thm:AWtoW}. To obtain estimates for $\TV_p$ in terms of $\W_p$, we closely follow the arguments in \cite[Propositions 4.1.5 and 4.1.6]{gine2021mathematical}, extending the results there from the one-dimensional setting to multiple dimensions, and from the total variation $\TV$ to its $p$-moment weighted analogue $\TV_p$.

\subsection{Kernel approximation}

For any $k$-th order kernel $K$, cf.\ Definition~\ref{def:kthorderkernel}, and any \( h > 0 \), we define the rescaled kernel by \( K_h(\cdot) := \frac{1}{h}K\left(\frac{\cdot}{h}\right) \). As a first step, we estimate the approximation error of a function \( f \) by its smoothed version \( K_h * f \). 
\begin{lemma}
\label{lem:Kf_f}
    Let $f \in W^{k,1}(\R^{dT})$, $k\in\N$ and $K$ be a $k$-th order kernel.
    Then, for all $h>0$,
    \[
        \| K_h\ast f - f \|_1 \le h^k C_{k,K} \|f \|_{k,1},
    \]
    where $C_{k,K}\coloneqq \sup_{|\alpha| \leq k}\frac{1}{\alpha!}\int |K(z)z^\alpha|dz $.
\end{lemma}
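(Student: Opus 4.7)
The plan is to combine a Taylor expansion of $f$ with the vanishing-moment property of the $k$-th order kernel $K$, and then apply Fubini to pull the $L^1$ norm inside the expansion of the remainder.

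First I would write the convolution error as
\[
K_h \ast f(x) - f(x) = \int_{\R^{dT}} K(z) \bigl( f(x - hz) - f(x) \bigr)\, dz,
\]
after the substitution $y = hz$ which turns $K_h(y)\,dy$ into $K(z)\,dz$. Next, for $f$ smooth (I will lift to $W^{k,1}$ at the end by a density argument), I would apply the multivariate Taylor formula with integral remainder of order $k$, namely
\[
f(x - hz) - f(x) = \sum_{1 \le |\alpha| < k} \frac{(-hz)^\alpha}{\alpha!}\, D^\alpha f(x) + k \sum_{|\alpha| = k} \frac{(-hz)^\alpha}{\alpha!} \int_0^1 (1 - t)^{k-1} D^\alpha f(x - thz)\, dt.
\]

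Substituting this identity into the convolution error and integrating against $K(z)\,dz$, the intermediate-order terms vanish by \cref{def:kthorderkernel}(ii) since $\int K(z) z^\alpha\, dz = 0$ for $1 \le |\alpha| < k$. Only the degree-$k$ remainder survives, so
\[
K_h \ast f(x) - f(x) = h^k \sum_{|\alpha| = k} \frac{k\,(-1)^{|\alpha|}}{\alpha!} \int_0^1 (1-t)^{k-1} \int K(z)\, z^\alpha\, D^\alpha f(x - thz)\, dz\, dt.
\]

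Taking $L^1$ norms in $x$, applying Fubini to integrate in $x$ first for fixed $z, t$ (which gives $\| D^\alpha f \|_1$ by translation invariance), and using $\int_0^1 (1-t)^{k-1} dt = 1/k$, I obtain
\[
\| K_h \ast f - f \|_1 \le h^k \sum_{|\alpha| = k} \frac{1}{\alpha!} \Bigl( \int |K(z)\, z^\alpha|\, dz \Bigr) \| D^\alpha f \|_1 \le h^k C_{k,K} \sum_{|\alpha| \le k} \| D^\alpha f \|_1 = h^k C_{k,K} \| f \|_{k,1}.
\]
Finally, to extend from smooth $f$ to $f \in W^{k,1}(\R^{dT})$, I would approximate by $C_c^\infty$ functions in the $\|\cdot\|_{k,1}$ norm; both sides of the inequality are continuous in this norm (using Young's inequality $\|K_h \ast g\|_1 \le \|K\|_1 \|g\|_1$ on the left), so the bound passes to the limit. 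The only mildly delicate step is the remainder manipulation and Fubini justification, but both are straightforward since $K \in L^1$ with compact-moment integrability and $D^\alpha f \in L^1$.
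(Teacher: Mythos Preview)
Your proof is correct and follows essentially the same approach as the paper: write $(K_h\ast f - f)(x)=\int K(z)(f(x-hz)-f(x))\,dz$, apply the Taylor expansion with integral remainder, use the vanishing moments of $K$ to discard the lower-order terms, estimate the $L^1$ norm via Fubini and translation invariance, and finally pass from $C_c^\infty$ to $W^{k,1}$ by density. If anything, your write-up is slightly cleaner in its handling of the Taylor remainder and the density step.
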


\begin{proof}
    First we assume that $f\in C_c^{\infty}(\R^{dT})$. Then we have that, for any $x,y \in \mathbb R^{dT}$,    
    \[
        f(y) = f(x)+
        \sum_{|\alpha| < k} \frac{D^\alpha f(x)}{\alpha!} (y - x)^\alpha 
        + R_k(y,x),
    \]
    where $R_k(y,x)$ is the reminder term and $\int_{B(x,|y-x|)} |R_k(y,x)| dy  = o(|y-x|^k)$.
    This yields that
    \begin{align*}
        (K_h\ast f - f)(x) &= \int K(z) (f(x - zh) - f(x)) \, dz\\
        &= \int K (z) \sum_{|\alpha| = k} \frac{k}{\alpha!} \int_0^1 (1 - t)^{k - 1} D^\alpha f(x - t z h) \, dt (- zh)^\alpha \, dz,
    \end{align*}
    where we replaced $f(x - zh) - f(x)$ with its Taylor expansion and used orthogonality of the kernel $K$ to get rid of all terms of order smaller than $k$.
    Next, we proceed to estimate the $L_1$-norm of $K_h(f) - f$, and obtain
    \begin{align*}
        \| K_h\ast f - f \|_1 &=  \int \Big| \int K(z) \sum_{|\alpha| = k} \frac{k}{\alpha!} \int_0^1 (1 - t)^{k - 1} D^\alpha f(x - t z h) \, dt \, (- zh)^\alpha \, dz \Big| \, dx 
        \\
        &\le
        \sum_{|\alpha| = k} \frac{k}{\alpha!} \int |K(z) z^\alpha| h^k \int_0^1 (1 - t)^{k - 1}  \left( \int \Big| D^\alpha f(x - t z h) \, dz \Big| \, dx \right)
        \, dt \, dz
        \\
        &\le
        \sum_{|\alpha| = k} \frac{k}{\alpha!} \int |K(z) z^\alpha| h^k \int_0^1 (1 - t)^{k - 1} \|D^\alpha f\|_1
        \, dt \, dz
        \\
        &= h^k \sum_{|\alpha| = k} \frac{\|D^\alpha f\|_1 }{\alpha!} \int |K(z) z^\alpha| \, dz
        \le h^k C_{k,K} \|f\|_{k,1},
    \end{align*}
    Lastly, since $C_c^{\infty}(\R^{dT})$ is dense in $W^{k,1}(\R^{dT})$, and $f\mapsto \|f\|_{k,1}$ as well as  $f\mapsto \| K_h\ast f - f \|_1$ are continuous w.r.t.\ $\Vert \cdot \Vert_{k,1}$, this yields the assertion.
\end{proof}

\begin{lemma}
\label{lem:Kfw_Kgw}
Let $\mu,\nu \in\calP(\R^{dT})$ have densities $f,g$, and let $w\colon \R^{dT} \to \R_{\geq 0}$ be measurable. Then, for all kernels \( K \colon \R^{dT} \to \R \) and $h>0$, we have 
\begin{equation}
\label{eq:lem:Kfw_Kgw.0}
    \Vert K_h\ast (wf - wg) \Vert_1 \leq \| K \|_1 \left|\int w d(\mu-\nu)\right| + \sup_{\substack{\mathrm{Lip}(\varphi) \leq \mathrm{Lip}(K) \\  \varphi(0) = 0}} \frac{1}{h}\int \varphi w\,d(\mu -\nu).
\end{equation}
In particular,
    \begin{enumerate}[(i)]      \item when $w\equiv 1$, 
    \begin{equation}
    \label{eq:lem:Kfw_Kgw.1}
    \Vert K_h\ast (f - g) \Vert_1 \leq \frac1h \mathrm{Lip}(K) \W_1(\mu,\nu).
    \end{equation}
    \item when $w(x) = 1+|x|_p^p$, $p, q \geq 1$, 
    \begin{equation}
    \label{eq:lem:Kfw_Kgw.2}
    \begin{split}
        \Vert K_h\ast (wf - wg) \Vert_1 &\leq C_1\W_q(\mu,\nu) +  \frac1h C_2\W_q(\mu,\nu),
    \end{split}
    \end{equation}
    where 
    \begin{equation}
    \label{eq:lem:Kfw_Kgw.const}
        C_1 \coloneqq \| K \|_1 p\left(M_{\frac{q(p-1)}{q-1}}^{\frac{q-1}{q}}(\mu) + M_{\frac{q(p-1)}{q-1}}^{\frac{q-1}{q}}(\nu)\right), C_2 \coloneqq \mathrm{Lip}(K) \left(1+ p M_{\frac{qp}{q-1}}^{\frac{q-1}{q}}(\mu) + (p+1)M_{\frac{qp}{q-1}}^{\frac{q-1}{q}}(\nu)\right).
    \end{equation}
    \end{enumerate}
\end{lemma}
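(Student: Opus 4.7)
The plan is to prove the main estimate \eqref{eq:lem:Kfw_Kgw.0} via $L^1$-$L^\infty$ duality. By Fubini's theorem applied to the identity
\[
\int \psi(x) \bigl(K_h \ast (wf - wg)\bigr)(x) \, dx = \int F_\psi(y) \, w(y) \, d(\mu - \nu)(y), \qquad F_\psi(y) := \int \psi(x) K_h(x-y) \, dx,
\]
I would reduce $\|K_h \ast (wf - wg)\|_1$ to $\sup_{\|\psi\|_\infty \le 1} \int F_\psi \, w \, d(\mu - \nu)$. Two properties of the auxiliary function $F_\psi$ drive the argument: (a) the uniform bound $\|F_\psi\|_\infty \le \|\psi\|_\infty \|K_h\|_1 \le \|K\|_1$ by Young's inequality, and (b) the Lipschitz estimate $\mathrm{Lip}(F_\psi) \le h^{-1}\mathrm{Lip}(K)$, which follows from the $L^1$-translation bound $\int |K_h(u - a) - K_h(u)| \, du \le |a| \, \|\nabla K_h\|_1$ combined with the scaling identity $\|\nabla K_h\|_1 = h^{-1}\|\nabla K\|_1$.

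The second step is the splitting $F_\psi(y) = F_\psi(0) + \varphi_\psi(y)$ where $\varphi_\psi := F_\psi - F_\psi(0)$ satisfies $\varphi_\psi(0) = 0$ and inherits the Lipschitz bound $\mathrm{Lip}(\varphi_\psi) \le h^{-1}\mathrm{Lip}(K)$. Substituting into the duality formula and invoking (a) yields
\[
\left| \int F_\psi \, w \, d(\mu - \nu) \right| \le \|K\|_1 \left| \int w \, d(\mu - \nu) \right| + \left| \int \varphi_\psi \, w \, d(\mu - \nu) \right|,
\]
and the rescaling $\varphi' := h \varphi_\psi$ (so that $\mathrm{Lip}(\varphi') \le \mathrm{Lip}(K)$ and $\varphi'(0) = 0$) extracts the factor $\tfrac{1}{h}$ in front of the supremum, producing exactly \eqref{eq:lem:Kfw_Kgw.0}.

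For case (i), I use $\int w \, d(\mu - \nu) = 0$ when $w \equiv 1$ together with Kantorovich-Rubinstein duality to identify the remaining supremum with $\mathrm{Lip}(K) \, \W_1(\mu,\nu)$. For case (ii) with $w(x) = 1 + |x|_p^p$, I would bound $|\int w \, d(\mu-\nu)|$ by picking any coupling $\pi \in \cpl(\mu,\nu)$ and applying the elementary estimate $\big| |x|_p^p - |y|_p^p \big| \le p \, |x-y| \max(|x|, |y|)^{p-1}$ followed by Hölder's inequality with exponents $q$ and $q/(q-1)$, which produces the $C_1 \W_q$ term. Similarly, the product split $\varphi(x) w(x) - \varphi(y) w(y) = (\varphi(x) - \varphi(y)) w(x) + \varphi(y)(w(x) - w(y))$ combined with $|\varphi(y)| \le \mathrm{Lip}(K) |y|$ and another Hölder step yields the $C_2 \W_q$ term. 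The main obstacle I anticipate is pinning down the Lipschitz constant in step (b): ensuring that $\mathrm{Lip}(F_\psi) \le h^{-1}\mathrm{Lip}(K)$ holds with exactly this constant in $\mathbb{R}^{dT}$ requires either interpreting $\mathrm{Lip}(K)$ as $\|\nabla K\|_1$ or absorbing a support-dependent factor, and the multi-dimensional scaling $\|\nabla K_h\|_1 = h^{-1}\|\nabla K\|_1$ must be tracked with care.
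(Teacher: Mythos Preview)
Your proposal is correct and follows essentially the same route as the paper: $L^1$--$L^\infty$ duality to rewrite $\|K_h \ast (wf-wg)\|_1$ as a supremum of $\int (\psi \ast K_h)\, w\, d(\mu-\nu)$, the split $F_\psi = F_\psi(0) + (F_\psi - F_\psi(0))$, and then the same product decomposition plus H\"older for part~(ii). Your flagged concern about the constant in $\mathrm{Lip}(F_\psi) \le h^{-1}\mathrm{Lip}(K)$ is legitimate---the paper also passes over this step without justification.
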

\begin{proof}
By the dual representation of $\| \cdot \|_1$, we get
\begin{equation*}
    \begin{split}
    &\quad~\Vert K_h\ast (wf - wg) \Vert_1\\ 
    &= \int \left| \int (fw - gw)(x-z)K_h(z)dz\right|dx\\
    &= \sup_{|\phi|\leq 1} \int \int \phi(x)(fw - gw)(x-z)K_h(z)dz dx\\
    &= \sup_{|\phi|\leq 1} \int (\phi\ast K_h)(x)w(x)(\mu - \nu)(dx)\\
    &= \sup_{|\phi|\leq 1} (\phi\ast K_h)(0) \int w d(\mu - \nu) + \int \Big((\phi\ast K_h)(x) - (\phi\ast K_h)(0)\Big)w(x)(\mu - \nu)(dx) \\
    &\leq \| K \|_1 \left|\int w d(\mu-\nu)\right| + \sup_{\substack{\mathrm{Lip}(\varphi) \leq \mathrm{Lip}(K) \\  \varphi(0) = 0}} \frac{1}{h}\int \varphi w\,d(\mu -\nu).
    \end{split}
\end{equation*}
This proves \eqref{eq:lem:Kfw_Kgw.0}. Next, to prove (i), by taking $w(x) = 1$, $x\in\R^{dT}$ and applying \eqref{eq:lem:Kfw_Kgw.0}, we have 
\begin{equation*}
    \begin{split}
        \Vert K_h\ast (f - g) \Vert_1 &\leq \| K \|_1 \left|\int d(\mu-\nu)\right| + \sup_{\substack{\mathrm{Lip}(\varphi) \leq \mathrm{Lip}(K) \\  \varphi(0) = 0}} \frac{1}{h}\int \varphi\,d(\mu -\nu)\\
        &= \sup_{\substack{\mathrm{Lip}(\varphi) \leq \mathrm{Lip}(K) \\  \varphi(0) = 0}} \frac{1}{h}\int \varphi\,d(\mu -\nu) = \frac1h \mathrm{Lip}(K) \W_1(\mu,\nu),
    \end{split}
\end{equation*}
where the first equality follows from $\int d\mu = \int d\nu = 1$, and the second one by the dual representation of the Wasserstein-$1$ distance. This proves \eqref{eq:lem:Kfw_Kgw.1}. Finally, to prove (ii), by taking $w(x) = (1 + |x|^p)$, $x\in\R^{dT}$ and applying \eqref{eq:lem:Kfw_Kgw.0}, we have 
\begin{equation}
\label{eq:ex:Kfw_Kgw.2:1.5}
    \begin{split}
        \Vert K_h\ast (wf - wg) \Vert_1 &\leq \| K \|_1 \left|\int (|x|^p - |y|^p)\pi(dx,dy)\right|\\
        &\qquad + \sup_{\substack{\mathrm{Lip}(\varphi) \leq \mathrm{Lip}(K) \\  \varphi(0) = 0}} \frac1h \int \varphi(x)(1 + |x|^p) (\mu -\nu)(dx).
    \end{split}
\end{equation}
Let $\pi \in \cpl(\mu,\nu)$. For the first term in \eqref{eq:ex:Kfw_Kgw.2:1.5}, by H\"{o}lder's inequality, for all $q > 1$,
\begin{equation}
\label{eq:ex:Kfw_Kgw.2:2}
\begin{split}
    \int (|x|^p - |y|^p)\pi(dx,dy) &\leq \int p(|x|^{p-1} + |y|^{p-1})|x-y|\pi(dx,dy)\\
    &\leq p\left(M_{\frac{q(p-1)}{q-1}}^{\frac{q-1}{q}}(\mu) + M_{\frac{q(p-1)}{q-1}}^{\frac{q-1}{q}}(\nu)\right)\W_q(\mu,\nu).
\end{split}
\end{equation}
For the second term in \eqref{eq:ex:Kfw_Kgw.2:1.5}, for all $q > 1$, $\varphi \colon \R^{dT} \to \R$ measurable s.t.\ $\varphi(0) = 0$ and $\mathrm{Lip}(\varphi) \leq \mathrm{Lip}(K)$,
\begin{equation}
\label{eq:ex:Kfw_Kgw.2:3}
\begin{split}
    &\quad~\int \varphi(x)(1 + |x|^p) (\mu -\nu)(dx)\\
    &= \int \left( \left(\varphi(x) - \varphi(y)\right)(1+|x|^p) + \varphi(y)(|x|^p - |y|^p) \right)\pi(dx,dy)\\
    &\leq \int \left(\frac{\varphi(x) - \varphi(y)}{|x-y|}|x-y|(1+|x|^p) + \varphi(y)|x-y|p(|x|^{p-1} + |y|^{p-1}) \right)\pi(dx,dy)\\
    &\leq \mathrm{Lip}(K)\int |x-y|\left(1+|x|^p + p|x|^{p-1}|y| + p|y|^{p} \right)\pi(dx,dy)\\
    &\leq \mathrm{Lip}(K)\int |x-y|\left(1+p|x|^p + (p+1)|y|^{p} \right)\pi(dx,dy),\quad (p|x|^{p-1}|y| \leq (p-1)|x|^p + |y|^p)\\
    &\leq \mathrm{Lip}(K) \left(1+ p M_{\frac{qp}{q-1}}^{\frac{q-1}{q}}(\mu) + (p+1)M_{\frac{qp}{q-1}}^{\frac{q-1}{q}}(\nu)\right)\W_q(\mu,\nu).
\end{split}
\end{equation}
By plugging \eqref{eq:ex:Kfw_Kgw.2:2} and \eqref{eq:ex:Kfw_Kgw.2:3} into \eqref{eq:ex:Kfw_Kgw.2:1.5}, we conclude that 
\begin{equation*}
    \begin{split}
        \Vert K_h\ast (wf - wg) \Vert_1 &\leq \| K \|_1 p\left(M_{\frac{q(p-1)}{q-1}}^{\frac{q-1}{q}}(\mu) + M_{\frac{q(p-1)}{q-1}}^{\frac{q-1}{q}}(\nu)\right)\W_q(\mu,\nu)\\
        &\quad + \frac1h \mathrm{Lip}(K) \left(1+ p M_{\frac{qp}{q-1}}^{\frac{q-1}{q}}(\mu) + (p+1)M_{\frac{qp}{q-1}}^{\frac{q-1}{q}}(\nu)\right)\W_q(\mu,\nu).
    \end{split}
\end{equation*}
This proves \eqref{eq:lem:Kfw_Kgw.2} and completes the proof of Lemma~\ref{lem:Kfw_Kgw}.
\end{proof}

\subsection{Proof of Theorem~\ref{thm:AWtoW}}
\begin{proof}[Proof of the estimates in Theorem~\ref{thm:AWtoW}]
    Recall that
    \begin{equation}
    \label{eq:thm:AW1toW1:1}
        \TV_p^p(\mu,\nu) = \Vert f_p - g_p\Vert_1 \leq \Vert f_p - K_h\ast f_p\Vert_1 + \Vert K_h\ast f_p - K_h\ast g_p\Vert_1 + \Vert K_h\ast g_p - g_p\Vert_1.
    \end{equation}
    By Lemma~\ref{lem:Kf_f}, we have
    \begin{equation}
    \label{eq:thm:AW1toW1:2}
        \| K_h \ast f_p - f_p \|_1 \le h^k C_{k,K} \|f_p \|_{k,1}, \quad \| K_h \ast g_p - g_p \|_1 \le h^k C_{k,K}\|g_p \|_{k,1}.
    \end{equation}
    Moreover, by Lemma~\ref{lem:Kfw_Kgw}-(ii), for all $q > 1$ we have that  
    \begin{equation}
    \label{eq:thm:AW1toW1:3}
        \Vert K_h\ast f_p - K_h\ast g_p\Vert_1 \leq C_1\W_q(\mu,\nu) + \frac1h C_2\W_q(\mu,\nu),
    \end{equation}
    with $C_1, C_2$ defined in \eqref{eq:lem:Kfw_Kgw.const}.
    By plugging \eqref{eq:thm:AW1toW1:2} and 
    \eqref{eq:thm:AW1toW1:3} into \eqref{eq:thm:AW1toW1:1}, and setting $h = \left(\frac{C_{2}\W_q(\mu,\nu)}{C_{k,K} (\|f \|_{k,1} + \|g \|_{k,1})}\right)^{\frac{1}{k+1}}$, we get
    \begin{equation*}
        \begin{split}
            \TV_{p}^p(\mu,\nu) &\leq C_1\W_q(\mu,\nu) + 2\big(C_{k,K}^{\frac{1}{k}}C_2\big)^{\frac{k}{k+1}}\left(\|f_p \|_{k,1} + \|g_p \|_{k,1}\right)^{\frac{1}{k+1}}\W_q^{\frac{k}{k+1}}(\mu,\nu).
        \end{split}
    \end{equation*}
    Combining this with Theorem~\ref{thm:AWpTVp}, we establish \eqref{eq:thm:AWtoW.1}. Next, to prove Theorem~\ref{thm:AWtoW}-(i), recall that 
    \begin{equation}
    \label{eq:thm:AW1toW1.compact:1}
        \TV(\mu,\nu) = \Vert f - g\Vert_1 \leq \Vert f - K_h\ast f\Vert_1 + \Vert K_h\ast f - K_h\ast g\Vert_1 + \Vert K_h\ast g - g\Vert_1.
    \end{equation}
    By Lemma~\ref{lem:Kf_f}, we have
    \begin{equation}
    \label{eq:thm:AW1toW1.compact:2}
        \| K_h \ast f - f \|_1 \le h^k C_{k,K} \|f \|_{k,1}, \quad \| K_h \ast g - g \|_1 \le h^k C_{k,K} \|g \|_{k,1},
    \end{equation}
    and, by Lemma~\ref{lem:Kfw_Kgw}-(i), 
    \begin{equation}
    \label{eq:thm:AW1toW1.compact:3}   
    \| K_h\ast f - K_h\ast g\|_1 \leq \frac1h \mathrm{Lip}(K) \W_1(\mu,\nu).
    \end{equation}
    By plugging \eqref{eq:thm:AW1toW1.compact:2} and 
    \eqref{eq:thm:AW1toW1.compact:3} into \eqref{eq:thm:AW1toW1.compact:1} and setting $h = \left(\frac{\mathrm{Lip}(K)\W_1(\mu,\nu)}{C_{k,K} (\|f \|_{k,1} + \|g \|_{k,1})}\right)^{\frac{1}{k+1}}$, we get
    \begin{equation*}  
    \TV(\mu,\nu) \leq 2\big(C_{k,K}^{\frac{1}{k}}\mathrm{Lip}(K)\big)^{\frac{k}{k+1}} \left(\|f_p \|_{k,1} + \|g_p \|_{k,1}\right)^{\frac{1}{k+1}} \W_1^{\frac{k}{k+1}}(\mu,\nu).
    \end{equation*}
    Combining this with Theorem~\ref{thm:AWpTVp}, we establish \eqref{eq:thm:AWtoW.2}. Finally, to prove Theorem~\ref{thm:AWtoW}-(ii), notice that $K\ast (f - g) = f - g$, because $K$ is a $k$-th order kernel which satisfies that $\int_{\mathbb{R}^{dT}} K(x)\, x^{\alpha} \, dx = 0$ for all $\alpha \in \mathbb{N}_0^{dT}$ with $1 \leq |\alpha| < k$.
    Then, by Lemma~\ref{lem:Kfw_Kgw}-(i) with $h=1$, we have 
    \[
    \TV(\mu,\nu) = \Vert f - g \Vert_1 = \Vert K_h\ast (f - g) \Vert_1 \leq \frac1h \mathrm{Lip}(K) \W_1(\mu,\nu) = \mathrm{Lip}(K) \W_1(\mu,\nu).
    \]
    Combining this with Theorem~\ref{thm:AWpTVp}, we establish \eqref{eq:thm:AWtoW.3}. 
\end{proof}

\begin{example}[The order in Theorem~\ref{thm:AWtoW} is sharp for $\AW_1$]
\label{ex:orderissharp}

For $k\in \N$ and $\epsilon \in (0,\frac{1}{8})$, we define distributions $\mu$ and $\nu$  (depending on $k, \epsilon$) on $[0,1]^2$ with densities $p_\mu$ and $p_\nu$, where $p_\mu = 1$ and
\begin{multline*}
    p_{\nu}(x_1,x_2)\\
    = 1 + \epsilon^k \sin(\frac{\pi x_1}{\epsilon}) \bigg[\mathbbm{1}_{\{x_2 \leq 2\epsilon\}}\Big( \sin(\frac{\pi x_2}{\epsilon} - \frac{\pi}{2}) + 1\Big) - \mathbbm{1}_{\{x_2 \geq 1-2\epsilon\}}\Big( \sin\big(\frac{\pi (1-x_2)}{\epsilon} - \frac{\pi}{2}\big) + 1\Big)\bigg];
\end{multline*}
see visualization in Figure~\ref{fig:density}. First, we compute the Soblev norm. Since the up-to-$k$-th order partial derivatives of $p_\mu$ are $0$ and the up-to-$k$-th order partial derivatives of $p_\nu$ are bounded in $[-2,2]$, there exists $c>0$ such that, for all $\epsilon \in (0,\frac{1}{8})$, $\|p_\mu \|_{k,1} + \|p_\nu \|_{k,1}\leq c$. Next, we compute the distances. As for the Wasserstein distance, we observe that at most $\epsilon^k$ mass is moved $2\sqrt{2} \epsilon$ far, so that $\W_1(\mu,\nu) \leq 2\sqrt{2}\epsilon^{k+1}$. Concerning the adapted Wasserstein distance, we have $\W_1(\mu_{x_1}, \nu_{y_1})\geq (1-4\epsilon) \epsilon^k |\sin(\frac{\pi x_1}{\epsilon})|$ for all $x_1,y_1 \in [0,1]$, since we have to move at least $\epsilon^k |\sin(\frac{\pi y_1}{\epsilon})|$ mass at a distance of at least $(1-4\epsilon)$. Therefore, for all $\epsilon \in (0,\frac{1}{8})$, we get the inequality
\[
\AW_1(\mu,\nu) \geq \inf_{\pi_1\in\cpl(\mu_1,\nu_1)}\int \W_1(\mu_{x_1}, \nu_{y_1})d\pi_1 = \int_0^1 (1-4\epsilon)\epsilon^k |\sin(\frac{\pi y_1}{\epsilon})|dy_1 \geq \frac{1}{2}\epsilon^k \frac{2}{\pi} = \frac{\epsilon^k}{\pi}.
\]
Thus, we get $\lim_{\epsilon \to 0} \AW_1(\mu,\nu) = \lim_{\epsilon \to 0} \W_1(\mu,\nu) = 0$ and
\[
\limsup_{\epsilon \to 0 }\frac{\AW_1(\mu,\nu)}{\W_1^{\frac{k}{k+1}}(\mu,\nu)} \geq \frac{1}{\pi 8^{\frac{1}{4}}} > 0.
\]
This proves that the order in Theorem~\ref{thm:AWtoW} is sharp for $\AW_1$.
\begin{figure}
    \centering
    \includegraphics[width=0.5\linewidth]{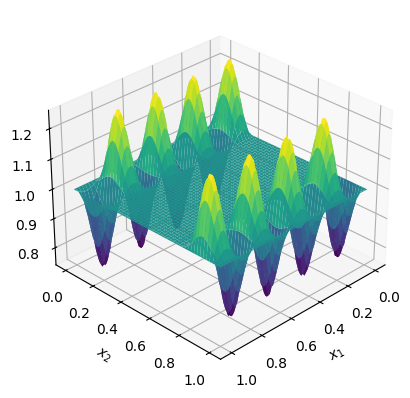}
    \caption{Density of $p_\nu$ in Example~\ref{ex:orderissharp}.}
    \label{fig:density}
\end{figure}
\end{example}

\section{Fast Convergence rate of $\AW_1$-distance}
\label{sec:fast}
This section is devoted to the proof of Theorem~\ref{thm:fast_rate_compact}. 

\begin{lemma}
\label{thm:negSobnorm}
Let $s, \gamma > 0$, $dT \geq 3$, $\mu \in \calP([0,1]^{dT})$ with density $f \in \mathcal{B}^s_{\infty,\infty}([0,1]^{dT})$. Assume further that $f \geq 1/\gamma$ over $[0,1]^{dT}$ for some $\gamma > 0$, and let $2^{J_n} \simeq n^{1/(2s + dT)}$. Then there exists a constant 
$C > 0$ depending on $\gamma,s$ such that
\begin{equation}
\label{eq:fast_rate_wass}
\mathbb{E}[\W_2(\hat \mu^n, \mu)] 
\leq C n^{-\frac{s+1}{2s+dT}}.
\end{equation}
\end{lemma}
\begin{proof}
    The estimator $\hat \mu^n$ defined in \eqref{eq:munhat} coincides with the one in \cite[Lemma~32]{manole2024plugin} when $\tilde f_n$ is non-negative. When $\tilde f_n$ takes negative values, the definition of $\hat \mu^n$ is different, but $\W_2(\hat \mu^n, \mu)$ remains bounded. Now, set $A_n = \{\tilde f_n \geq 0\}$. By \cite[Lemma~31]{manole2024plugin}, we have that $\sP(A_n^{\mathsf{c}}) \lesssim n^{-2}$. Since $\mu \in \calP([0,1]^{dT})$, it follows that on the event $A_n^{\mathsf{c}}$ we have $\W_2(\hat \mu_n, \mu) \leq 1 + \mathrm{diam}([0,1]^{dT}) = 1 + \sqrt{dT}$.
    Thus we have
    \begin{equation}
        \label{eq:fast_rate_wass_1}
        \mathbb{E}[\W_2(\hat \mu^n, \mu)] \leq \mathbb{E}[\W_2(\hat \mu^n, \mu)\mathbbm{1}_{A_n}] + \mathbb{E}[\W_2(\hat \mu^n, \mu)\mathbbm{1}_{A_n^{\mathsf{c}}}] \lesssim \mathbb{E}[\W_2(\hat \mu^n, \mu)\mathbbm{1}_{A_n}] + n^{-2}.
    \end{equation}
    Then, by \cite[Lemma~32]{manole2024plugin}, we have $\mathbb{E}[\W_2(\hat \mu^n, \mu)\mathbbm{1}_{A_n}] \lesssim n^{-\frac{s+1}{2s+dT}}$, which combined with \eqref{eq:fast_rate_wass_1}, completes the proof of \eqref{eq:fast_rate_wass}. 
\end{proof}

\begin{lemma}
\label{lem:norm_equiv}
    Let $r > 0$ and $r\notin \N$. Then there exists $C\geq 0$ such that, for any $g \colon [0,1]^{dT} \to \R$   differentiable up to order $\lfloor r \rfloor$ in the interior of $[0,1]^{dT}$, it holds that
    \[
    \Vert g \Vert_{W_{\lfloor r \rfloor,\infty}} \leq d^{\lfloor r \rfloor} \Vert g \Vert_{\mathcal{C}^{r}} \leq C \Vert g\Vert_{\mathcal{B}_{\infty, \infty}^{r}},
    \]
    where 
    \[
    \| g \|_{\calC^r} 
    \coloneqq \sum_{j=0}^{\lfloor r \rfloor} \sup_{|\alpha| = j} \| D^\alpha g \|_\infty
    + \sum_{|\alpha| = \lfloor r \rfloor} \sup_{\substack{x,y \in (0,1)^{dT}\\ x \neq y}}
    \frac{| D^\alpha g(x) - D^\alpha g(y) |}{\|x-y\|^{r - \lfloor r \rfloor}} .
    \]
\end{lemma}
\begin{proof}
    By Definition~\ref{def:sob}, we have 
    \[
    \Vert g \Vert_{W_{\lfloor r \rfloor,\infty}} = \sum_{|{\alpha}|\leq \lfloor r \rfloor} \Vert D^{{\alpha}}g \Vert_{\infty} \leq d^{\lfloor r \rfloor} \sum_{j=0}^{\lfloor r \rfloor} \sup_{|\alpha| = j} \| D^\alpha g \|_\infty \leq d^{\lfloor r \rfloor} \Vert g \Vert_{\mathcal{C}^{r}},
    \]
    so the first inequality in the statement holds. 
    Further, by \cite[Lemma~27]{manole2024plugin}, since $r\notin \N$, the $\Vert \cdot \Vert_{\mathcal{C}^{r}}$-norm and the $\Vert \cdot\Vert_{\mathcal{B}_{\infty, \infty}^{r}}$-norm are equivalent, so we get the second inequality, hence completing the proof.
\end{proof}

\begin{lemma}
\label{lem:Sobnorm_emp_density}
    Assume the setting of Theorem~\ref{thm:fast_rate_compact}. Then there exists $C >0$ such that
    \[
    \E\big[\Vert \tilde f_n \Vert_{W_{k,1}}\big] \leq  C \quad\text{for all $n \in \N$.}
    \]
\end{lemma}
\begin{proof}
    Recall that $f$ admits the wavelet expansion
    \[
    f = \sum_{\zeta \in \Phi} \beta_\zeta \zeta + \sum_{j=j_0}^{\infty}\sum_{\xi \in \Psi_j}\beta_\xi \xi,
    \]
    and that
    \[
    \tilde f_n = \sum_{\zeta \in \Phi}\hat \beta_\zeta^n \zeta + \sum_{j=j_0}^{J_n} \sum_{\xi \in \Psi_{j}} \hat\beta^n_\xi \xi.
    \]
    By \cite[Lemma~30]{manole2024plugin}, there exist constants $v, b > 0$ depending only on the choice of wavelet system such that, for any $J_n \geq j_0$, and all $u > 0$,
    \begin{equation}
    \label{eq:estimate_beta_1}
        \sP\Bigg( \sup_{\zeta \in \Phi} |\hat\beta_\zeta^n - \beta_\zeta| \geq u \Bigg) \lesssim \exp\left\{ - \frac{n u^2}{b} \right\},
    \end{equation}
    \begin{equation}
    \label{eq:estimate_beta_2}
    \sP\Bigg( \sup_{\xi \in \Psi_j} |\hat\beta_\xi^n - \beta_\xi| \geq u \Bigg) 
    \lesssim 2^{dTj/2} \exp\left\{ - \frac{n u^2}{v + b 2^{dTj/2} u} \right\}, 
    \quad j_0 \leq j \leq J_n.
    \end{equation}
    Let $r \in (k,s)$ and $r\notin \N$. A union bound combined with
    \eqref{eq:estimate_beta_2} leads to
    \begin{equation*}
    \sP\Bigg( \sup_{j_0 \leq j \leq J_n} \sup_{\xi \in \Psi_j} 
    \big|\hat\beta_\xi^n - \beta_\xi \big| \geq u \Bigg) 
    \lesssim J_n 2^{dT J_n/2} \exp\Bigg\{ - \frac{n u^2}{v + b2^{dT J_n /2} u} \Bigg\},
    \end{equation*}
    whence, since $2^{J_n} \simeq n^{\frac{1}{2s+dT}}$, 
    \begin{equation}
    \label{eq:beta_estimate}
    \begin{split}
    \sP\Bigg( 
    2^{\frac{J_n (2r+dT)}{2}} 
    \sup_{j_0 \leq j \leq J_n} \sup_{\xi \in \Psi_j}|\hat\beta_\xi^n - \beta_\xi| \geq u \Bigg)
    &\lesssim J_n 2^{dT J_n/2} 
    \exp\Bigg\{ 
    -\frac{n u^2 2^{-J_n (2r+dT)}}{v + b 2^{- J_n r} u} 
    \Bigg\}.
    \end{split}
    \end{equation}
    We further define
    \[
    f_n = \sum_{\zeta \in \Phi} \beta_\zeta \zeta + \sum_{j=j_0}^{J_n}\sum_{\xi \in \Psi_j}\beta_\xi \xi.
    \]
    Combining \eqref{eq:estimate_beta_1}, \eqref{eq:estimate_beta_2}, \eqref{eq:beta_estimate}, and using the definition of $ \| \cdot \|_{\calB^{r}_{\infty, \infty}}$, we have
    \begin{align*}
    &\sP\Big( \| \tilde f_n - f_n \|_{\calB^{r}_{\infty, \infty}} \geq u \Big)\\
    \leq~& 
    \sP\Big( \sup_{\zeta \in \Phi} |\hat\beta_\zeta^n - \beta_\zeta| \geq u/2 \Big) 
    + 
    \sP\Bigg( 
    2^{\frac{J_n (2r+dT)}{2}} 
    \sup_{j_0 \leq j \leq J_n} 
    \sup_{\xi \in \Psi_j} |\hat\beta_\xi^n - \beta_\xi| \geq u/2 
    \Bigg) \\
    \lesssim~& J_n 2^{dT J_n/2} 
    \exp\Bigg\{ 
    -\frac{n u^2 2^{-J_n (2r+dT)}}{4v + 2b 2^{- J_n r} u} 
    \Bigg\}.
    \end{align*}
    Therefore,
    \begin{align*}
    \E\Big[ \| \tilde f_n - f_n \|_{\mathcal{B}^{r}_{\infty, \infty}} \Big] &\leq  \sum_{u=1}^{\infty} u \sP\Big( \| \tilde f_n - f_n \|_{\mathcal{B}^{r}_{\infty, \infty}} \geq u \Big)\\
    &\lesssim J_n 2^{dT J_n/2} \sum_{u=1}^{\infty}u 
    \exp\Bigg\{ 
    -\frac{n u^2 2^{-J_n (2r+dT)}}{4v + 2b 2^{- J_n r} u} 
    \Bigg\}.
    \end{align*}
    Setting $A_n \coloneqq n 2^{-J_n (2r+dT)} \simeq n^{\frac{2(s-r)}{2s+dT}}$ and $B_n \coloneqq 2b 2^{-J_n r} \simeq n^{\frac{-r}{2s+dT}}$, we get
    \begin{equation}
        \label{eq:sum_u}
        \E\Big[ \| \tilde f_n - f_n \|_{\mathcal{B}^{r}_{\infty, \infty}} \Big] \lesssim J_n 2^{dTJ_n/2} \sum_{u=1}^\infty \exp\bigg\{\frac{-A_n u^2}{4v + B_n u}\bigg\}.
    \end{equation}
    Note that, for all $u \geq \frac{4v}{B_n}$, we have $\frac{-A_n u^2}{4v + B_n u} \leq \frac{-A_n u}{2B_n}$. Let $u_0 = \lfloor \frac{4v}{B_n} \rfloor$. We estimate 
    \begin{equation}
    \label{eq:sum_u_split}
        \sum_{u=1}^{\infty}u 
    \exp\bigg\{\frac{-A_n u^2}{4v + B_n  u}\bigg\} \leq \sum_{u=1}^{u_0}u 
    \exp\bigg\{\frac{-A_n u^2}{4v + B_n  u}\bigg\} + \sum_{u=u_0 + 1}^{\infty}u 
    \exp\bigg\{\frac{-A_n u}{2 B_n}\bigg\}.
    \end{equation}
    By denoting $\alpha_n \coloneqq \frac{A_n}{2 B_n} \simeq n^{\frac{2s-r}{2s+dT}}$, we have 
    \begin{equation}
    \label{eq:sum_u_part2}
        \sum_{u=u_0+1}^{\infty}u 
    \exp\bigg\{\frac{-A_n u}{2B_n}\bigg\} \leq \int_1^\infty u e^{-\alpha_n u} \, du = e^{-\alpha_n}\Big(\frac{1}{\alpha_n} + \frac{1}{\alpha_n^2}\Big) \lesssim 1.
    \end{equation}
    Also note that
    \begin{equation}
        \label{eq:sum_u_part1}
        \sum_{u=1}^{u_0}u 
    \exp\bigg\{\frac{-A_n u^2}{4v + B_n  u}\bigg\} \lesssim u_0^2 \exp\bigg\{\frac{-A_n}{4v + 4v}\bigg\} \lesssim n^{\frac{2r}{2s+dT}}\exp\bigg\{-\frac{n^{\frac{2(s-r)}{2s+dT}}}{8v}\bigg\} \lesssim 1.
    \end{equation}
    Combining \eqref{eq:sum_u}, \eqref{eq:sum_u_split}, \eqref{eq:sum_u_part1}, and \eqref{eq:sum_u_part2}, we obtain that there exists $C_0 > 0$ such that
    \[
    \E\Big[ \| \tilde f_n - f_n \|_{\calB^{r}_{\infty, \infty}} \Big] \leq C_0.
    \]
    Thus, we have
    \[
    \E\big[\Vert \tilde f_n \Vert_{\mathcal{B}_{\infty, \infty}^{r}}\big]  \leq \E\big[\Vert \tilde f_n - f_n \Vert_{\mathcal{B}_{\infty, \infty}^{r}}\big] + \Vert f_n \Vert_{\mathcal{B}_{\infty, \infty}^{r}} \leq C_0 + \Vert f\Vert_{\mathcal{B}_{\infty, \infty}^{r}}.
    \]
    By Lemma~\ref{lem:norm_equiv} and the fact that $r > k\in\N$, we have that, for any $g \colon [0,1]^{dT} \to \R$  differentiable up to order $\lfloor r \rfloor$ in the interior of $[0,1]^{dT}$, there exists $C_1\geq 0$ such that
    \[
    \Vert g \Vert_{W_{k,1}} \leq \Vert g \Vert_{W_{k,\infty}} \leq \Vert g \Vert_{W_{\lfloor r \rfloor,\infty}}\leq d^{\lfloor r \rfloor} \Vert g \Vert_{\mathcal{C}^{r}} \leq C_1 \Vert g\Vert_{\mathcal{B}_{\infty, \infty}^{r}}.
    \]
    Therefore, we conclude that there exists $C >0$ such that 
    \[
    \E\big[\Vert \tilde f_n \Vert_{W_{k,1}}\big] \leq \E\big[\Vert \tilde f_n \Vert_{\calC^r} \big] \leq C_1 (C_0 + \Vert f \Vert_{\calB_{\infty,\infty}^{r}}) \leq  C. \qedhere
    \]
\end{proof}

\begin{proof}[Proof of Theorem~\ref{thm:fast_rate_compact}]
First, by applying Theorem~\ref{thm:AWtoW}-(i) with $p=1$, we have
\begin{equation}
\label{eq:fast_rate.1}
\begin{split}
    \E\big[\AW_1(\mu,\hat \mu_n)\big] &\lesssim \E\bigg[\left(\|f \|_{k, 1} + \|\hat f_n \|_{k, 1}\right)^{\frac{1}{k+1}} \W_1^{\frac{k}{k+1}}(\mu,\hat \mu_n)\bigg]\\
    &\lesssim\E\big[\|f \|_{k, 1} + \|\hat f_n \|_{k, 1}\big]^{\frac{1}{k+1}} \E\big[\W_1(\mu,\hat \mu_n)\big]^{\frac{k}{k+1}},
\end{split}
\end{equation}
where the last inequality follows by H\"{o}lder's inequality. Let $r \in (k,s)$ and $r \notin \N$. By Lemma~\ref{lem:norm_equiv} and the fact that $r > k\in\N$, there exists $C_1 > 0$ such that 
\begin{equation}
    \label{eq:norm_bound_1}
    \|f \|_{W_{k, 1}} \leq \|f \|_{W_{k, \infty}} \leq C_1 \Vert f\Vert_{\mathcal{B}_{\infty, \infty}^{r}} < \infty.
\end{equation}
By Lemma~\ref{lem:Sobnorm_emp_density}, there exists $C_2 > 0$ such that for all $n \in \N$,
\begin{equation}
    \label{eq:norm_bound_2}
    \E\big[\|\tilde f_n \|_{W_{k, 1}}\big] < C_2.
\end{equation}
Moreover, we have
\begin{equation}
    \label{eq:norm_bound_3}
    \E\big[\Vert \varphi(x-X^{(1)})\Vert_{W_{k,1}}\big] = \Vert \varphi \Vert_{W_{k,1}} < \infty.
\end{equation}
Hence, combining \eqref{eq:norm_bound_1}, \eqref{eq:norm_bound_2}, and \eqref{eq:norm_bound_3}, there exists $C_3 > 0$ such that for all $n \in \N$,
\[
\E\big[\|f \|_{W_{k, 1}} + \|\hat f_n \|_{W_{k, 1}}\big] \leq \|f \|_{W_{k, 1}} + \E\big[\|\tilde f_n \|_{W_{k, 1}}\big] + \E\big[\Vert \varphi(x-X_1)\Vert_{W_{k,1}}\big] \leq C_3
\]
and
\begin{equation}
\label{eq:fast_rate.2}
    \E\big[\AW_1(\mu,\hat \mu_n)\big] \lesssim \E\big[\W_1(\mu,\hat \mu_n)\big]^{\frac{k}{k+1}} \leq \E\big[\W_2(\mu,\hat \mu_n)\big]^{\frac{k}{k+1}}.
\end{equation}
Finally, by combining \eqref{eq:fast_rate.1}, \eqref{eq:fast_rate.2} and Theorem~\ref{thm:negSobnorm}, we have
\begin{equation*}
    \E\big[\AW_1(\mu,\hat \mu_n)\big] \lesssim n^{-\frac{s+1}{2s+dT}\frac{k}{k+1}} \lesssim n^{-\frac{k+1}{2k+dT}\frac{k}{k+1}},
    \end{equation*}
which completes the proof.
\end{proof}

\appendix
\section{Boundary-Corrected Wavelets}
\label{app:wavelets}
In this section, we give an introduction of boundary-corrected wavelets for the sake of completeness. Our exposition closely follows that of Appendix~A.2 in \cite{manole2024plugin} and Section~4.3 in \cite{gine2021mathematical}. We also refer the reader to \cite{cohen1993wavelets} and references
therein for further details.

For an integer $N\geq 2$, we denote the compactly-supported $N$-th Daubechies scaling and wavelet functions by $\zeta_0, \xi_0 \in \mathcal{C}^r(\mathbb{R}^d)$, where $r = 0.18 (N - 1)$; see \cite[Theorem~4.2.10]{gine2021mathematical}. It is well-known that the $N$-th Daubechies wavelet system 
\[
\zeta_{0k} = \zeta_0(\,\cdot - k\,), 
\qquad 
\xi_{0jk} = 2^{\frac{j}{2}} \xi_0(2^j(\,\cdot\,) - k), 
\qquad 
j \ge 0,\, k \in \mathbb{Z},
\]
forms a basis of $L^2(\mathbb{R})$, with the property that 
$\{ \zeta_{0k} : k \in \mathbb{Z} \}$ 
spans all polynomials on $\mathbb{R}$ of degree at most $N - 1$. 
While this family may easily be periodized to obtain a basis for
$L^2([0,1])$, doing so may not accurately reflect the regularity 
of functions in $L^2([0,1])$ via the decay of their wavelet coefficients, near the boundaries of the 
interval. This consideration motivated 
the introduction of the so-called boundary-corrected wavelet system on $[0,1]$, which preserves the 
standard Daubechies scaling functions lying sufficiently far from the boundaries of the interval, 
and adds edge scaling functions such that their union continues to span all polynomials 
up to degree $N-1$ on $[0,1]$. In short, given a fixed integer $j_0 \ge \log_2 N$, the construction in \cite{cohen1993wavelets} leads to smooth scaling edge basis functions
\[
\zeta^{\text{left}}_{0j_0k} 
\quad \text{with support contained in } 
[0, (2N - 1)/2^{j_0}],
\]
\[
\zeta^{\text{right}}_{0j_0k} 
\quad \text{with support contained in } 
[1 - (2N - 1)/2^{j_0}, 1],
\]
which in turn can be used to define edge wavelet functions 
$\xi^{\text{left}}_{0j_0k}, \xi^{\text{right}}_{0j_0k}$, for $k = 0, \ldots, N-1$.
In this case, if one defines
\[
\zeta^{a}_{0jk} = 2^{\frac{j - j_0}{2}} \zeta^{a}_{0j_0k}(2^{j - j_0}(\cdot)), 
\quad 
\xi^{a}_{0jk} = 2^{\frac{j - j_0}{2}} \xi^{a}_{0j_0k}(2^{j - j_0}(\cdot)),
\]
for all $j \ge j_0$ and $a \in \{\text{left}, \text{right}\}$,
then the families
\begin{eqnarray*}
\Phi_0 
&=& \{ \zeta_{0j_0k} : 0 \le k \le 2^{j_0} - 1 \} \\
&=& \{ \zeta^{\text{left}}_{0j_0k}, \zeta^{\text{right}}_{0j_0k}, \zeta_{0m} : 0 \le k \le N - 1,\, N \le m \le 2^{j_0} - N - 1 \},
\end{eqnarray*}
\begin{eqnarray*}
\Psi_0 
&=& \{ \xi_{0jk} : 0 \le k \le 2^j - 1,\, j \ge j_0 \}\\
&=& \{ \xi^{\text{left}}_{0jk}, \xi^{\text{right}}_{0jk}, \xi_{0jm} : 0 \le k \le N - 1,\, N \le m \le 2^{j_0} - N - 1,\, j \ge j_0 \}
\end{eqnarray*}
form an orthonormal basis of $L^2([0,1])$, with the property that 
$\Phi$ spans all polynomials on $[0,1]$ of degree at most $N - 1$. We then define a tensor product wavelet basis of $L^2([0,1]^{dT})$ 
by setting, for all $j \ge j_0$ and all 
$\ell = (\ell_1, \ldots, \ell_{dT}) \in \{0,1\}^{dT} \setminus \{0\}$,
\[
\zeta_{j_0 k}(x) = \prod_{i=1}^{dT} \zeta_{0j_0k_i}(x_i), 
\qquad 
\xi_{j k \ell}(x) 
= \prod_{i:\ell_i = 0} \zeta_{0jk_i}(x_i) 
  \prod_{i:\ell_i = 1} \xi_{0jk_i}(x_i),
\qquad x \in [0,1]^{dT},
\]
where, in the definition of $\zeta_{j_0 k}$, 
the index $k = (k_1, \ldots, k_{dT})$ ranges over 
$\mathcal{K}(j_0) := \{1, \ldots, 2^{j_0} - 1\}^{dT}$, 
while in the definition of $\xi_{j k \ell}$, 
$k$ ranges over $\mathcal{K}(j)$. In this case, the wavelet system
\[
\Psi = \Phi \cup \bigcup_{j = j_0}^{\infty} \Psi_j,
\qquad
\Phi = \{ \zeta_{j_0k} : k \in \mathcal{K}(j_0) \}, 
\qquad
\Psi_j = \{ \xi_{j k \ell} : k \in \mathcal{K}(j) \}, 
\quad j \ge j_0,
\]
forms a basis of $L^2([0,1]^{dT})$. 

\printbibliography

\end{document}